\documentclass[12pt]{amsart}
\usepackage{amssymb,mathrsfs,longtable,textcomp}
\usepackage[matrix,arrow,curve]{xy}
\usepackage{setspace}
\usepackage{multirow}
\usepackage{epsfig,color}
\usepackage{url}

\sloppy \pagestyle{plain}
\thispagestyle{empty}

\textwidth=16cm \textheight=23cm \oddsidemargin=0cm
\evensidemargin=0cm \topmargin=-20pt

\newcounter{cequation}[section]

\pagenumbering{arabic}

\newtheorem{theorem}[cequation]{Theorem}
\newtheorem*{theorem*}{Theorem}
\newtheorem{lemma}[cequation]{Lemma}
\newtheorem{corollary}[cequation]{Corollary}

\newtheorem{question}[cequation]{Question}
\newtheorem{proposition}[cequation]{Proposition}

\theoremstyle{definition}

\newtheorem{definition}[cequation]{Definition}
\newtheorem*{definition*}{Definition}

\theoremstyle{remark}
\newtheorem{remark}[cequation]{Remark}

\makeatletter\@addtoreset{equation}{section}
\makeatletter\@addtoreset{section}{part}

\makeatother

\def \O {\mathcal{O}}

\def \CC {\Bbbk}

\def \P {\mathbb{P}}
\def \PP {\mathbb{P}}

\def \ZZ {\mathbb{Z}}


\def \Sing {\mathrm{Sing}\,}


\def \ge {\geqslant}
\def \le {\leqslant}


\title{Fano weighted complete intersections of large codimension}

\author{Victor Przyjalkowski and Constantin Shramov}

\address{\emph{Victor Przyjalkowski}
\newline
\textnormal{Steklov Mathematical Institute of RAS, 8 Gubkina street, Moscow 119991, Russia.
}
\newline
\textnormal{\texttt{victorprz@mi.ras.ru, victorprz@gmail.com}}}

\address{\emph{Constantin Shramov}
\newline
\textnormal{Steklov Mathematical Institute of RAS,
8 Gubkina street, Moscow 119991, Russia.
}
\newline
\textnormal{\texttt{costya.shramov@gmail.com}}}

\thanks{Victor Przyjalkowski was 
supported by 
grant MD--30.2020.1.
Constantin Shramov was supported 
by the Foundation for the
Advancement of Theoretical Physics and Mathematics ``BASIS''.
Both authors are Young Russian Mathematics award winners and would like to thank
its sponsors and jury.
}


\begin{document}

\begin{abstract}
We classify smooth Fano weighted complete intersections of large codimension.
\end{abstract}

\maketitle

\section{Introduction}
\label{section:intro}

Given a smooth Fano variety $X$ over an algebraically closed field $\CC$ of characteristic zero, we denote by $i_X$ the Fano index of $X$, that is, the maximal positive integer $i$
such that the canonical class $K_X$ is divisible by $i$ in the Picard group of~$X$.
It is well known that~\mbox{$i_X\le \dim X+1$}, see~\cite[Corollary 3.1.15]{IP99}.
The goal of this note is to prove the following (we refer the reader to~\cite{Do82} and~\cite{IF00},
or to \S\ref{section:preliminaries} below, for the relevant definitions).

\begin{theorem}
\label{theorem:big_codim}
Let $X\subset\PP=\PP(a_0,\ldots,a_N)$ be a smooth well formed Fano weighted complete intersection of dimension $n\ge 2$ and codimension $k=N-n$ that is not an intersection with a linear cone. The following assertions hold.
\begin{itemize}
\item[(i)] One has $k\le n-i_X+1$.

\item[(ii)] If $k=n-i_X+1$, then $X$ is a complete intersection of $n-i_X+1$ quadrics in~\mbox{$\PP=\P^N$}.

\item[(iii)] Suppose that $k=n-i_X\ge 2$ (so that in particular $n\ge 3$).
Then $X$ is a complete intersection of $n-i_X-1$ quadrics and a cubic in $\PP=\P^N$.
\end{itemize}
\end{theorem}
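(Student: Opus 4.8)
The plan is to convert the three assertions into a single statement about the Fano coindex and then exploit that $X$ is genuinely smooth, not merely quasi-smooth. Set $m=\sum_{i=0}^N a_i-\sum_{j=1}^k d_j$. Since $X$ is well formed, quasi-smooth and not a linear cone, one has $\Pic X=\ZZ\cdot H$ with $H=\mathcal O_X(1)$ for $n\ge 3$ by Grothendieck--Lefschetz, while adjunction on the ambient space gives $-K_X=mH$; hence $i_X=m$. (For $n=2$ I would check directly that $m$ still computes the index of the del Pezzo surface $X$.) A one-line manipulation using $N+1=n+k+1$ turns everything into a statement about one quantity:
\[
(n+1-i_X)-k=\sum_{j=1}^k(d_j-2)-\sum_{i=0}^N(a_i-1).
\]
Because $X$ is not a linear cone, every $d_j\ge 2$. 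Thus assertion (i) is exactly the inequality $\sum_i(a_i-1)\le\sum_j(d_j-2)$, assertion (ii) is its equality case $\sum_i(a_i-1)=\sum_j(d_j-2)$, and assertion (iii) is the case $\sum_j(d_j-2)-\sum_i(a_i-1)=1$; all three are really statements about the coindex $n+1-i_X$.

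For the inequality itself the essential input is smoothness: a smooth well formed $X$ must be disjoint from the singular strata of $\PP$. Concretely, if $a_i\ge 2$ then the corresponding coordinate point (and the strata where weights share a common factor) lie in $\Sing\PP$, so $X$ avoids them; translating ``$X$ avoids a stratum'' into the defining equations forces the existence of suitable monomials, i.e.\ divisibility relations between the $a_i$ and the $d_j$. The plan is to package these relations into an injective accounting that charges each unit of excess weight $a_i-1$ to a distinct unit of excess degree $d_j-2$, giving $\sum_i(a_i-1)\le\sum_j(d_j-2)$. That smoothness, and not just quasi-smoothness, is indispensable is already visible for $k=0$: the quasi-smooth well formed space $\PP(1,\dots,1,a)$ with $a\ge 2$ violates the inequality, having negative coindex, whereas the only \emph{smooth} weighted projective space is $\P^N$.

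For the extremal statements I would combine this with adjunction in the form of a coindex drop. Writing $X$ as an ample divisor of degree $d_k$ in the weighted complete intersection $X'=X_{d_1,\dots,d_{k-1}}$ of dimension $n+1$ gives
\[
(n+2-i_{X'})=(n+1-i_X)-(d_k-1),
\]
so peeling off one equation lowers the coindex by $d_k-1\ge 1$. In case (ii) equality forces $d_k=2$ and an extremal $X'$, and induction on $k$ identifies $X'$, hence $X$, with an intersection of quadrics in $\P^N$; the base case $k=0$ is $\P^N$ by the previous paragraph. In case (iii) the same bookkeeping forces $d_k\le 3$: peeling off the cubic lands on the extremal case (ii), while peeling off quadrics stays within the ``coindex $=$ codimension $+1$'' stratum, and here the hypotheses $k\ge 2$ and $n\ge 3$ are precisely what keep the induction from descending into the codimension-one del Pezzo world, where genuinely weighted examples such as $X_4\subset\PP(1,1,1,1,2)$ occur.

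The step I expect to be the main obstacle is the smoothness bookkeeping of the second paragraph: turning ``$X$ is a smooth manifold'' into a clean additive set of inequalities between the weights and the degrees, with sharp control of the equality and near-equality cases. The difficulty is that a single large weight can interact with several equations at once, so the charging argument must be genuinely global rather than weight-by-weight; keeping it injective, and simultaneously excluding the exotic weighted configurations in the boundary cases $n\in\{2,3\}$ and small $k$ (where $\Pic X$ may fail to be cyclic and the coindex classification differs), is where the real work lies.
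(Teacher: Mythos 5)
Your reformulation $(n+1-i_X)-k=\sum_j(d_j-2)-\sum_i(a_i-1)$ is correct and is a clean way to state what must be proved, but the argument has a genuine gap exactly where you predict it: the ``injective accounting'' charging each unit of $a_i-1$ to a distinct unit of $d_j-2$ is never constructed, and it is the entire content of assertion (i). What smoothness actually yields (Lemma~\ref{lemma:smoothness-criterion}) are divisibility relations between \emph{greatest common divisors} of subsets of weights and subsets of degrees, not unit-by-unit inequalities, and converting these into the additive bound is the hard part: already for the smooth hypersurface $X_6\subset\P(1^n,2,3)$ the excess weight $(2-1)+(3-1)=3$ must be absorbed by a single degree with $d-2=4$, so no weight-by-weight scheme can work. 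The paper proves (i) by a different induction: by Theorem~\ref{theorem:PST} and Lemma~\ref{lemma:PST} a general member of $|\O_X(1)|$ is again a smooth well formed weighted complete intersection of the same multidegree and codimension with index dropped by one, so one reduces to $i_X=1$; there Corollary~\ref{corollary:PST} gives $a_0=\cdots=a_n=1$, and the remaining $k$ weights are paired against the $k$ degrees via $d_{k-j}>a_{N-j}$ (Theorem~\ref{theorem:deltas}), yielding $1\le i_X\le n+1-k$. None of these inputs (all nontrivial results from \cite{PST}, \cite{IF00}, \cite{ChenChenChen}) appears in your outline.

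The induction for (ii) and (iii) has a second gap. Peeling off the last equation passes to the specific variety $X'=X_{d_1,\dots,d_{k-1}}$ of dimension $n+1$ cut out by the remaining equations, and there is no reason for $X'$ to be smooth (for a smooth intersection of two quadrics in $\P^4$, either quadric may be a cone), so the inductive hypothesis --- a statement about \emph{smooth} weighted complete intersections --- cannot be applied to it; the paper's induction instead goes downward in dimension through smooth hyperplane sections, which is why it stays inside the class of varieties covered by the theorem. Note also that equality in your charging inequality would only make the charging a bijection, not force both sides to vanish, so even granting (i) the identification of $X$ in the boundary cases needs additional input; the paper obtains it from $d_k\ge 2a_N$ (Lemma~\ref{lemma:last_weight}) combined with the pairing $d_j>a_{n+j}$, which forces $a_N=1$ and then pins down all degrees.
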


Note that the assumption that $X$ in Theorem~\ref{theorem:big_codim} is well formed, as well
as the assumption that
$X$ is not an intersection with a linear cone,  can be omitted provided that $X$ is sufficiently general and $n\ge 3$,
see Proposition~\ref{proposition:unconization} below.

\smallskip
We are grateful to M.\,Ovcharenko who found a mistake in the first version of our paper.

\section{Preliminaries}
\label{section:preliminaries}

Let $a_0,\ldots,a_N$ be positive integers. Consider the graded algebra~\mbox{$\CC[x_0,\ldots,x_N]$},
where the grading is defined by assigning the weights $a_i$ to the variables~$x_i$.
Put
$$
\P=\P(a_0,\ldots,a_N)=\mathrm{Proj}\,\CC[x_0,\ldots,x_N].
$$

The weighted projective space $\P$ is said to be \emph{well formed} if the greatest common divisor of any $N$ of the weights~$a_i$ is~$1$. Every weighted projective space is isomorphic to a well formed one, see~\cite[1.3.1]{Do82}.
A subvariety $X\subset \P$ is said to be \emph{well formed}
if~$\P$ is well formed and
$$
\mathrm{codim}_X \left( X\cap\mathrm{Sing}\,\P \right)\ge 2,
$$
where the dimension of the empty set is defined to be $-1$.

We say that a subvariety $X\subset\P$ of codimension $k$ is a \emph{weighted complete
intersection of multidegree $(d_1,\ldots,d_k)$} if its weighted homogeneous ideal in $\CC[x_0,\ldots,x_N]$
is generated by a regular sequence of $k$ homogeneous elements of degrees $d_1,\ldots,d_k$.
Note that $\P$ can be thought of as a weighted complete
intersection of codimension $0$ in itself.

\begin{theorem}[{see \cite[Proposition 8]{Di86}}]
\label{theorem:Sing-WF}
Let $X\subset \PP$ be a well formed weighted complete intersection.
Then
$$
\Sing X=X\cap\Sing\PP.
$$
\end{theorem}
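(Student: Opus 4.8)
The plan is to descend to the local quotient presentation of $\PP$ and to compare the singularities of $X$ and of $\PP$ one chart at a time. Write $\PP=(\A^{N+1}\setminus\{0\})/\mathbb{G}_m$ for the $\mathbb{G}_m$-action with weights $a_0,\dots,a_N$, and let $C\subset\A^{N+1}$ be the affine cone over $X$, cut out by the same regular sequence $f_1,\dots,f_k$ of degrees $d_1,\dots,d_k$. On the chart $U_i=\{x_i\neq0\}$ one has $U_i\cong V_i/\mu_{a_i}$, where $V_i=\{x_i=1\}\cong\A^N$ with coordinates $x_j$, $j\neq i$, and a primitive $a_i$-th root of unity $\varepsilon$ acts by $x_j\mapsto\varepsilon^{a_j}x_j$; correspondingly $X\cap U_i\cong W_i/\mu_{a_i}$ with $W_i=C\cap V_i$. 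Since $\PP$ is well formed, $\gcd(a_j:j\neq i)=1$, so the residues $a_j\bmod a_i$ generate $\Z/a_i$ and the $\mu_{a_i}$-action on $V_i$ is faithful. Fixing a point $x\in W_i$ with stabilizer $G_x=\mu_m\subseteq\mu_{a_i}$, linearisation of the finite group action at its fixed point identifies the local model of $\PP$ (resp. of $X$) at the image of $x$ with $T_xV_i/G_x$ (resp. with $T_xW_i/G_x$).

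Two inputs then drive the comparison. First, smoothness of $C$ away from its vertex (the form in which the hypothesis enters) makes $W_i$ smooth, so $T_xW_i\subseteq T_xV_i$ is a $G_x$-subrepresentation of codimension $k$; the quotient $T_xV_i/T_xW_i$ is dual to the conormal space spanned by the classes of $df_1,\dots,df_k$, which have weights $d_1,\dots,d_k$. Hence, as multisets of characters of $G_x$, the weights of $T_xV_i$ are those of $T_xW_i$ together with $-d_1,\dots,-d_k$ modulo $m$ (the signs being irrelevant below). Second, by Chevalley--Shephard--Todd, the quotient of a vector space by a \emph{faithful} cyclic group $\mu_m$ with $m>1$ is smooth exactly when the action has a single nonzero weight; equivalently, such a quotient is singular iff at least two of the weights are nonzero modulo $m$. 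Since $G_x$ acts faithfully on $T_xV_i$, the image of $x$ lies in $\Sing\PP$ precisely when at least two weights of $T_xV_i$ are nonzero modulo $m$.

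The inclusion $\Sing X\subseteq X\cap\Sing\PP$ is the easy half. If the image of $x$ is a smooth point of $\PP$, then either $m=1$, or $m>1$ and exactly one weight of $T_xV_i$ is nonzero modulo $m$. As the characters of $T_xW_i$ form a sub-multiset of those of $T_xV_i$, the group $G_x$ acts on $T_xW_i$ either trivially or with a single nonzero weight, which by the ambient faithfulness is coprime to $m$; in either case $T_xW_i/G_x$ is smooth, so the image of $x$ is a smooth point of $X$.

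The reverse inclusion $X\cap\Sing\PP\subseteq\Sing X$ is the crux, and I expect it to be the main obstacle. Here one must show that when at least two weights of $T_xV_i$ are nonzero modulo $m$, the quotient $T_xW_i/G_x$ is still singular. Two degenerations could a priori destroy this: all the surplus nonzero weights of $T_xV_i$ might sit in the conormal directions $d_1,\dots,d_k$, leaving $T_xW_i$ with at most one nonzero weight; or the weights surviving on $T_xW_i$ might share a common factor with $m$, so that the group acting effectively on $T_xW_i$ is a proper quotient $\mu_{m'}$ for which at most one weight is nonzero modulo $m'$. Excluding exactly these degenerations is the role of well formedness of $X$, that is, of the condition $\mathrm{codim}_X(X\cap\Sing\PP)\ge2$. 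Turning this into a proof requires a careful bookkeeping of the stabilizer weights along the strata of $\Sing\PP$ and of the degrees $d_l$ relative to $m$, together with the codimension-two hypothesis; this is the step I expect to consume most of the argument.
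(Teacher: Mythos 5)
The paper does not prove this statement at all --- it is quoted verbatim from Dimca --- so your argument has to stand on its own, and as written it does not: you explicitly stop before the inclusion $X\cap\Sing\PP\subseteq\Sing X$, which you correctly identify as the crux (and which is in fact the only direction the paper ever invokes). That is a genuine gap, not deferred routine bookkeeping. Worse, the tool you plan to close it with is false. Your version of Chevalley--Shephard--Todd --- that a faithful linear action of $\mu_m$, $m>1$, has smooth quotient exactly when a single weight is nonzero modulo $m$ --- fails already for $\mu_6$ acting on $\A^2$ with weights $(2,3)$: the action is faithful, both weights are nonzero, yet the invariant ring is $\CC[v_1^3,v_2^2]$ and the quotient is smooth. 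The correct criterion is generation by pseudo-reflections, which for cyclic groups is not equivalent to counting nonzero weights. Your description of $\Sing\PP$ happens to come out right, but for a reason you do not give: well-formedness of $\PP$ forces the $\mu_{a_i}$-action on the chart $V_i$ to contain \emph{no} pseudo-reflections (an element of order $d>1$ fixing all coordinates but one would force $d$ to divide $N$ of the weights), so a point of $\PP$ is singular if and only if its stabilizer is nontrivial. On $T_xW_i$, whose weights are not controlled by well-formedness of $\PP$, the false criterion is load-bearing and the plan breaks down exactly at the two ``degenerations'' you list.

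The missing direction is where well-formedness of $X$ must enter, and the mechanism is again pseudo-reflections rather than weight counting. Suppose $p(x)\in X\cap\Sing\PP$, so $G_x=\mu_m$ with $m>1$, and suppose $X$ were smooth at $p(x)$. By Chevalley--Shephard--Todd, either some nontrivial $g\in G_x$ acts trivially on $T_xW_i$, hence on $W_i$ near $x$, so a whole neighbourhood of $p(x)$ in $X$ lies in $\Sing\PP$; or the image of $G_x$ in $\GL(T_xW_i)$ is generated by pseudo-reflections, and the fixed locus of such a pseudo-reflection is a codimension-one subvariety of $W_i$ through $x$ consisting of points with nontrivial stabilizer, whose image therefore lies in $X\cap\Sing\PP$. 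Either case contradicts $\codim_X\left(X\cap\Sing\PP\right)\ge 2$, which is precisely the well-formedness of $X$. This is the argument you deferred; without it, and with the smoothness criterion corrected, the proof is incomplete. Note finally that the equality also requires quasi-smoothness --- a nodal cubic surface in $\P^3$ is a well formed weighted complete intersection with $\Sing X\ne X\cap\Sing\P^3$ --- so the ``smoothness of $C$ away from its vertex'' that you invoke is an additional hypothesis (present in Dimca's setting and available in every application in this paper), not something contained in the stated ones.
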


The following property is an analog of smoothness for subvarieties in a weighted
projective space.

\begin{definition}[{see~\cite[Definition 6.3]{IF00}}]
\label{definition: quasi-smoothness}
Let $p\colon \mathbb A^{N+1}\setminus \{0\}\to \P$ be the natural projection to the weighted projective space. A subvariety $X\subset \P$
is called \emph{quasi-smooth} if the preimage~\mbox{$p^{-1}(X)$} is smooth.
\end{definition}

\begin{lemma}
\label{lemma:wellformization}
Let $X\subset \PP$ be a weighted complete intersection.
Then $X$ is isomorphic to a weighted complete intersection $X'$ of the same codimension in a well formed weighted projective space $\PP'$. Moreover, if $X$ is quasi-smooth, $X'$ is also quasi-smooth, and if $X$ is a general weighted
complete intersection of the corresponding multidegree in $\PP$, then $X'$
is also a general weighted
complete intersection of the corresponding multidegree in $\PP'$.
\end{lemma}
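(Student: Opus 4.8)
The plan is to well-form $\PP$ by iterating Dolgachev's elementary reduction step and to carry the weighted complete intersection along at each step. First I would normalize, replacing $\PP(a_0,\ldots,a_N)$ by $\PP(a_0/g,\ldots,a_N/g)$ with $g=\gcd(a_0,\ldots,a_N)$ (an isomorphism), so that the overall gcd is $1$. If $\PP$ is still not well formed, then after reindexing $m:=\gcd(a_1,\ldots,a_N)>1$, and since the overall gcd is $1$ we automatically have $\gcd(a_0,m)=1$. One reduction will replace $\PP$ by $\PP':=\PP(a_0,a_1/m,\ldots,a_N/m)$, dividing $N$ of the weights by $m\ge 2$, so the product $a_0\cdots a_N$ strictly decreases and stays a positive integer; hence finitely many steps reach a well formed space, and it suffices to treat a single step. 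When $\PP$ is already well formed I simply take $X'=X$.

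For one step I would realize the isomorphism geometrically. Writing $\PP=(\A^{N+1}\setminus\{0\})/\mathbb{G}_m$ with $\mathbb{G}_m$ acting with weights $(a_0,\ldots,a_N)$, the subgroup $\mu_m\subset\mathbb{G}_m$ acts, because $m\mid a_j$ for $j\ge1$ and $\gcd(a_0,m)=1$, only on $x_0$, scaling it by all $m$-th roots of unity. Thus the quotient is $q\colon \A^{N+1}\to\A^{N+1}$ with $z_0=x_0^m$, $z_j=x_j$ for $j\ge1$, invariant ring $R=\CC[x_0^m,x_1,\ldots,x_N]\subset S:=\CC[x_0,\ldots,x_N]$; reparametrizing the residual torus $\mathbb{G}_m/\mu_m$ by $t\mapsto t^m$, its action on the $z_j$ has weights $(a_0,a_1/m,\ldots,a_N/m)$, which is exactly the action defining $\PP'$. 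This yields an isomorphism $\phi\colon\PP\xrightarrow{\sim}\PP'$ under which $\phi^*$ identifies the graded ring of $\PP'$ with $R$, the $\PP'$-grading being the $S$-grading on $R$ divided by $m$.

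Now set $X'=\phi(X)$; it is isomorphic to $X$ and has the same dimension, so $\codim_{\PP'}X'=k$. The key local observation is that a monomial of $\PP$-degree divisible by $m$ lies in $R$: from $c_0a_0+\sum_{j\ge1}c_ja_j\equiv 0\pmod m$ and $m\mid a_j$ we get $m\mid c_0a_0$, hence $m\mid c_0$ since $\gcd(a_0,m)=1$. Consequently, when each $d_l$ is divisible by $m$, every $f_l$ lies in $R$, say $f_l=\phi^*(g_l)$ with $\deg_{\PP'}g_l=d_l/m$. As $S$ is free of rank $m$, hence faithfully flat, over $R$, the sequence $f_1,\ldots,f_k$ is regular in $S$ if and only if $g_1,\ldots,g_k$ is regular in $R$, so $X'$ is a weighted complete intersection of multidegree $(d_1/m,\ldots,d_k/m)$ and codimension $k$. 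For quasi-smoothness I use $C_X=q^{-1}(C_{X'})$ together with $\partial_{x_0}(g_l\circ q)=m x_0^{m-1}(\partial_{z_0}g_l)\circ q$ and $\partial_{x_j}(g_l\circ q)=(\partial_{z_j}g_l)\circ q$: where $x_0\ne0$ the factor $m x_0^{m-1}$ is a unit and the Jacobian ranks of $X$ at $P$ and of $X'$ at $q(P)$ coincide, while where $x_0=0$ the rank for $X'$ exceeds that for $X$ by at most the missing $\partial_{z_0}$-column, so full rank $k$ on $C_X\setminus\{0\}$ forces full rank $k$ on $C_{X'}\setminus\{0\}$; since every point of $C_{X'}\setminus\{0\}$ lifts to $C_X\setminus\{0\}$, quasi-smoothness transfers. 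Finally, for the genericity claim: when $m\mid d_l$ the space of degree-$d_l$ forms equals $R_{d_l}$, so $\phi^*$ is a linear isomorphism from degree-$(d_l/m)$ forms on $\PP'$ onto degree-$d_l$ forms on $\PP$, whence a general tuple $(f_1,\ldots,f_k)$ corresponds to a general tuple $(g_1,\ldots,g_k)$.

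The step I expect to be the main obstacle is justifying that the defining degrees may be taken divisible by $m$, i.e. that the $f_l$ lie in $R$. For the genericity statement this is essentially automatic, since if $m\nmid d_l$ then every degree-$d_l$ monomial has $x_0$-exponent $\ge r_l$ with $r_l\equiv d_la_0^{-1}\not\equiv 0\pmod m$, so a general form of that degree is forced to factor as $x_0^{r_l}\cdot(\text{general form in }R)$, and the two parameter spaces still match. In the remaining non-divisible cases I would factor the forced power $x_0^{r_l}$ out of each $f_l$, pass to $\PP'$, and absorb the resulting power of $z_0$, checking that $\phi(X)$ is again cut out by a length-$k$ regular sequence of a suitably corrected multidegree. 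Carrying out this bookkeeping uniformly, and verifying its compatibility with quasi-smoothness and with well-formedness of the final $\PP'$, is the delicate point; everything else is the formal descent of a complete intersection along the finite quotient $q$.
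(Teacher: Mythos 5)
Your route is the same as the paper's: reduce the overall gcd to $1$, then iterate the elementary reduction $\PP(a_0,ma_1',\ldots,ma_N')\cong\PP(a_0,a_1',\ldots,a_N')$ (the paper quotes this as the $a$-th Veronese map of \cite[Lemma 5.7]{IF00}; you reconstruct it as the quotient by $\mu_m\subset\mathbb{G}_m$ acting only on $x_0$), and transport the equations. The parts you complete are correct and in fact more careful than the published argument: the descent of the regular sequence via faithful flatness of $S=\CC[x_0,\ldots,x_N]$ over $R=\CC[x_0^m,x_1,\ldots,x_N]$, the Jacobian comparison for quasi-smoothness (the paper instead invokes that the affine cone over $X'$ is the quotient of the cone over $X$ by quasi-reflections), and the matching of parameter spaces for the genericity claim.

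The step you leave unfinished --- what to do when $m\nmid d_l$ --- is a genuine gap, and it is precisely the point the paper dispatches with the sentence that ``by homogenicity $a$ divides all degrees of the variable $x_0$ in the equations defining $X$.'' As your congruence $c_0\equiv d_l a_0^{-1}\pmod m$ shows, that assertion is literally valid only when $m\mid d_l$; for instance $X=\{x_0=0\}\subset\PP(1,2,2,2)$ is a weighted complete intersection of multidegree $(1)$ for which it fails. To close the gap, use that the weighted homogeneous ideal $I(X)$ is prime: if $m\nmid d_l$ then, as you note, $f_l=x_0^{r_l}g_l$ with $r_l>0$ and $g_l\in R$, so primality forces either $g_l\in I(X)$ or $x_0\in I(X)$. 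In the first case replace the generator $f_l$ by $g_l$; its degree $d_l-r_la_0$ is then divisible by $m$, and the new $k$ generators still form a regular sequence because they generate the same height-$k$ ideal in a Cohen--Macaulay ring. In the second case $X$ lies in the coordinate hyperplane $\{x_0=0\}$, and one checks directly (using the $R$-basis $1,x_0,\ldots,x_0^{m-1}$ of $S$) that $I(X)\cap R$ is again generated by $k$ elements, one of which is $z_0$. Either way the multidegree may change --- which the lemma's phrase ``corresponding multidegree'' tacitly allows, and which your bookkeeping should record --- while codimension, quasi-smoothness and generality are preserved by the arguments you already gave. So your proposal is essentially the paper's proof with one real difficulty correctly identified but not resolved; be aware that the paper itself is no more explicit at exactly that point.
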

\begin{proof}
We may assume that the greatest common divisor of the weights $a_0,\ldots,a_N$ equals $1$.
Suppose that $\PP$ is not well formed. This means that, up to renumbering, one has~\mbox{$a_i=aa_i'$},
where $a_i'\in \ZZ_{\ge 1}$ for  $i=1,\ldots,N$, and $a\in\ZZ_{\ge 2}$.
Then, if we put $a_0'=a_0$, we get an isomorphism $\PP\cong \PP'=\PP(a_0',\ldots,a_N')$ given by the $a$-th Veronese map, see, for instance,~\cite[Lemma 5.7]{IF00}.
The image of $X$ under this isomorphism is a weighted complete intersection $X'\subset\PP'$. Indeed, by homogenicity
$a$ divides all degrees of the variable $x_0$ in the equations defining $X$, so equations that define $X'$ are given
from ones that define $X$ by replacing $x_0^a$ by $x_0'$ and $x_i$ by $x_i'$, $i=1,\ldots, N$,
where $x_i$ and $x_i'$ are weighted homogeneous coordinates on $\PP$ and $\PP'$ of weights $a_i$ and $a_i'$, respectively.
Repeating this procedure sufficiently many times, we may assume that $\PP'$ is well formed.

The affine cone over $X'$ is a quotient of the affine cone over $X$ by a group generated by quasi-reflections, so $X'$ is quasi-smooth if and only if $X$ is quasi-smooth. The assertion about the generality of $X'$ is obvious.
\end{proof}

Although quasi-smoothness is a natural condition to consider, we will be mostly interested
in weighted complete intersections that are smooth in the usual sense.

\begin{remark}[{\cite[Corollary 2.14]{PrzyalkowskiShramov-Weighted}}]
\label{remark:smooth}
Let $X$ be a smooth well formed weighted complete intersection.
Then $X$ is quasi-smooth.
\end{remark}

The following lemma provides a smoothness criterion for weighted complete intersections.

\begin{lemma}[{\cite[Lemma 2.5]{PrzyalkowskiShramov-Weighted}}]
\label{lemma:smoothness-criterion}
Let $X\subset\PP$ be a smooth well formed weighted complete intersection
of multidegree $(d_1,\ldots,d_k)$.
Then for every $r$ and every choice of~$r$ weights~\mbox{$a_{i_1},\ldots,a_{i_r}$, $i_1<\ldots<i_r$},
such that their greatest common divisor $\delta$ is greater than~$1$,
there exist $r$ degrees $d_{s_1},\ldots,d_{s_r}$, $s_1<\ldots<s_r$,
such that their greatest common divisor is divisible by~$\delta$.
\end{lemma}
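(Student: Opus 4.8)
The plan is to produce the $r$ required degrees by intersecting $X$ with the coordinate stratum spanned by the chosen weights and then counting equations. Write $I=\{i_1,\ldots,i_r\}$ and let $\Pi\subset\PP$ be the coordinate weighted projective subspace $\{x_j=0 : j\notin I\}$, so that $\Pi\cong\PP(a_{i_1},\ldots,a_{i_r})$ has dimension $r-1$. Because $\delta>1$ divides every weight $a_{i_l}$, the greatest common divisor of the weights indexing any nonempty subset of $I$ is again divisible by $\delta$, hence exceeds $1$; by the standard description of the singular locus of a well formed weighted projective space this means every point of $\Pi$ is singular, i.e.\ $\Pi\subseteq\Sing\PP$.

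Next I would use smoothness to push $X$ off this stratum. Since $X$ is smooth we have $\Sing X=\emptyset$, so Theorem~\ref{theorem:Sing-WF} gives $X\cap\Sing\PP=\Sing X=\emptyset$, and therefore $X\cap\Pi=\emptyset$. Passing to affine cones in $\A^{N+1}$, let $C$ be the common zero locus of a defining regular sequence $f_1,\ldots,f_k$ of degrees $d_1,\ldots,d_k$, and let $L=\{x_j=0 : j\notin I\}\cong\A^r$ be the cone over $\Pi$. The vanishing $X\cap\Pi=\emptyset$ says that no nonzero point of $L$ satisfies all the $f_m$, so $C\cap L=\{0\}$, the origin of $L$.

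The counting step is then purely commutative-algebraic. The scheme $C\cap L$ is cut out in $L\cong\A^r$ by the restricted polynomials $f_1|_L,\ldots,f_k|_L$, and discarding those that vanish identically leaves, say, $t$ nonzero polynomials whose common zero locus is still $\{0\}$. As $\{0\}$ has codimension $r$ in $\A^r$, Krull's height theorem forces $t\ge r$. Finally, each surviving $f_m|_L$ is a nonzero weighted homogeneous polynomial of degree $d_m$ in the variables $x_{i_1},\ldots,x_{i_r}$ alone; every monomial it contains has degree a nonnegative integer combination of the weights $a_{i_l}$, each divisible by $\delta$, so $\delta\mid d_m$. Thus at least $r$ of the degrees $d_1,\ldots,d_k$ are divisible by $\delta$, and selecting any $r$ of them yields $d_{s_1},\ldots,d_{s_r}$ whose greatest common divisor is divisible by $\delta$, as required.

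I expect the only delicate points to be the two structural inputs rather than the final arithmetic: verifying $\Pi\subseteq\Sing\PP$ from the well formedness of $\PP$ and the standard stratification of its singular locus, and justifying $C\cap L=\{0\}$ together with the height bound $t\ge r$. The translation between the projective emptiness $X\cap\Pi=\emptyset$ and the affine statement $C\cap L=\{0\}$ must be carried out carefully so that the vertex of the cone, which always lies in $C\cap L$, does not interfere with the dimension count.
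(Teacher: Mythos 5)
Your argument is correct and is essentially the proof given in the cited reference \cite[Lemma 2.5]{PrzyalkowskiShramov-Weighted} (the paper itself only quotes the lemma, but deploys the same circle of ideas in the proof of Lemma~\ref{lemma:PST}): the stratum $\Pi$ lies in $\Sing\PP$ by \cite[5.15]{IF00}, smoothness plus Theorem~\ref{theorem:Sing-WF} forces $X\cap\Pi=\emptyset$, at least $r$ of the restricted equations must survive to cut out the empty set in an $(r-1)$-dimensional stratum, and each surviving equation has degree divisible by $\delta$. Your only deviation is cosmetic --- passing to affine cones and invoking Krull's height theorem instead of counting dimensions directly in $\Pi\cong\PP(a_{i_1},\ldots,a_{i_r})$ --- and it is handled correctly, including the treatment of the cone vertex.
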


The following definition describes weighted complete intersections
that are to a certain extent analogous to complete intersections
in a usual projective space that are contained in a hyperplane.

\begin{definition}[{cf. \cite[Definition~6.5]{IF00}}]
\label{definition:cone}
A weighted complete intersection~$X\subset\P$
is said to be \emph{an intersection
with a linear cone} if one has $d_j=a_i$ for some~$i$ and~$j$.
\end{definition}

\begin{theorem}[{see \cite[Theorem~6.17]{IF00}}]
\label{theorem:cone-vs-wf}
Let $X\subset\PP$ be a quasi-smooth weighted complete intersection of dimension at least $3$.
Suppose that $\PP$ is well formed and $X$ is not an intersection with a linear cone, and that
$X$ is general in the family of weighted complete intersections of the same multidegree in $\PP$.
Then $X$ is well formed.
\end{theorem}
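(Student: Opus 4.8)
The plan is to deduce well-formedness of $X$ from a dimension bound for the intersection of $X$ with each singular stratum of $\PP$. Since $\PP=\PP(a_0,\ldots,a_N)$ is well formed, its singular locus is the union of the coordinate weighted subspaces
$$
\Sigma_I=\{x_l=0\mid l\notin I\}\cong\PP(a_i\mid i\in I),\qquad I\subseteq\{0,\ldots,N\},
$$
taken over those $I$ with $\delta_I:=\gcd\{a_i\mid i\in I\}>1$; well-formedness of $\PP$ forces $|I|\le N-1$ for each such $I$. By definition $X$ is well formed exactly when $\dim(X\cap\Sigma_I)\le\dim X-2$ for every such $I$. So I would fix one stratum with $\delta_I>1$, set $T=\{0,\ldots,N\}\setminus I$ and $s=|T|$ (so that $\dim\Sigma_I=N-s$), and argue by contradiction, assuming $\dim(X\cap\Sigma_I)\ge\dim X-1=n-1$.

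First I would determine which of the defining equations $f_1,\ldots,f_k$, of degrees $d_1,\ldots,d_k$, restrict nontrivially to $\Sigma_I$. A monomial in the variables $\{x_i\mid i\in I\}$ has degree divisible by $\delta_I$, so $f_j|_{\Sigma_I}\equiv 0$ unless $\delta_I\mid d_j$; I call $f_j$ \emph{supported} if it admits a monomial purely in the $I$-variables, and let $c$ be the number of supported equations. Only these are nontrivial on $\Sigma_I$, so for general $X$ the forms $f_j|_{\Sigma_I}$ are general members of their (nonempty) linear systems and cut $\Sigma_I$ in codimension $c$; hence $\dim(X\cap\Sigma_I)=(N-s)-c$, and the non-well-formed hypothesis becomes $c\le k-s+1$. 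I would then pass to the affine cone $C_X=\{f_1=\cdots=f_k=0\}\subset\A^{N+1}$ and choose a component $Z$ of $C_X\cap L_I$, where $L_I=\{x_l=0\mid l\in T\}$, with $\dim Z\ge n$; note $Z$ is a cone, so $0\in Z$.

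The heart of the argument uses quasi-smoothness of $X$, which forces the Jacobian of $(f_1,\ldots,f_k)$ to have rank $k$ at every point of $C_X\setminus\{0\}$. At a point $y\in L_I$ the gradient of an unsupported $f_j$ has vanishing $I$-components, and its $x_l$-component ($l\in T$) equals $g_{j,l}(y)$, where $g_{j,l}$ is the part of $f_j$ that is linear in $x_l$ and a monomial in the $I$-variables otherwise, a form of degree $d_j-a_l$. This is where the hypothesis that $X$ is not an intersection with a linear cone (Definition~\ref{definition:cone}) is essential: $d_j\ne a_l$ guarantees that every nonzero $g_{j,l}$ has positive degree, so the $(k-c)\times s$ matrix $G=(g_{j,l})$ cannot have constant rank, and for general $X$ its rank-drop locus $D=\{\operatorname{rank}G<k-c\}$ is a determinantal cone in $L_I$ of the expected codimension. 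In the main case $c=k-s+1$ one has $k-c=s-1$, so $D$ has codimension $2$; since $Z$ and $D$ are cones in $L_I$ with $\dim Z+\dim D-\dim L_I\ge n-2\ge 1$ once $n\ge 3$, their intersection is positive-dimensional and contains a point $y\ne 0$. At such $y$ the unsupported rows of the Jacobian span at most $k-c-1$ directions while the supported rows contribute at most $c$, so by subadditivity of rank over row blocks the Jacobian has rank at most $k-1$, contradicting quasi-smoothness. (This is exactly why the threshold is $n\ge 3$: for $n=2$ the forced intersection $Z\cap D$ can collapse to the cone point, which quasi-smoothness is allowed to ignore.)

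Finally I would treat the remaining case $c\le k-s$, where $\dim(X\cap\Sigma_I)=\dim X$ forces $X\subseteq\Sigma_I$; for general $X$ this would place a coordinate $x_l$ in the ideal $(f_1,\ldots,f_k)$, which is impossible unless some $d_j=a_l$, i.e. unless $X$ is an intersection with a linear cone. I expect the main obstacle to be the two genericity inputs underlying the count: that the restricted systems $f_j|_{\Sigma_I}$ really cut $\Sigma_I$ in codimension $c$ — weighted linear systems can have base loci along the very strata under consideration, so this likely requires care or an induction on the stratification — and that the determinantal locus $D$ attains its expected codimension and genuinely meets $Z$ away from the origin. Verifying, in addition, that $k-c\ge s-1$ is precisely what pins the codimension of $D$ at $2$, and hence makes the bound $n\ge 3$ sharp, is the delicate point on which the whole argument turns.
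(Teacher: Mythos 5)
The paper does not prove this statement: it is quoted verbatim from \cite[Theorem~6.17]{IF00}, so there is no internal proof to compare against. Judged on its own merits, your argument is the right one (and is in the spirit of the Jacobian/tangent-space argument behind the cited result): reduce well-formedness to a dimension bound on each coordinate stratum $\Sigma_I$ of $\Sing\PP$, split the equations into those that survive restriction to $\Sigma_I$ and those that do not, observe that at a point $y$ of the affine cone $L_I$ the unsupported rows of the Jacobian live entirely in the $T$-columns and are given by forms $g_{j,l}$ of degree $d_j-a_l>0$ (this is exactly where ``not an intersection with a linear cone'' enters), and then play the Eagon--Northcott codimension bound for the rank-drop locus $D$ against the affine dimension theorem applied to the two cones $Z$ and $D$ through the origin. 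The arithmetic closes: with $c\le k-s+1$ one gets $\codim_{L_I}D\le 2$, hence $\dim(Z\cap D)\ge n-2\ge 1$, producing a nonzero point of the affine cone over $X$ where the Jacobian has rank at most $c+(k-c-1)=k-1$, contradicting quasi-smoothness; and this is precisely where $n\ge 3$ is used.

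The one genuine gap is the equality $\dim(X\cap\Sigma_I)=(N-s)-c$, which you flag yourself. As stated it can fail: a component of $X\cap\Sigma_I$ may sit inside the base locus of one of the restricted linear systems $|\O_{\Sigma_I}(d_j)|$ (these base loci are unions of smaller coordinate strata, determined by which $d_j$ lie in the numerical semigroups of sub-collections of the weights), and then a general member of that system does not cut it down. The standard repair is the induction you allude to: given a component $Z$ of $C_X\cap L_I$ with $\dim Z\ge n$, replace $I$ by the minimal set of coordinates not vanishing identically on $Z$; for that minimal $I$, a supported $f_j$ has base locus not containing $L_I$, hence (by irreducibility of $Z$ and minimality of $I$) not containing $Z$, and the bound $\dim Z\le |I|-c$ follows for general $X$, after which your rank argument runs verbatim on the smaller stratum --- note that it never uses $\delta_I>1$. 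Two smaller points. First, you do not need $D$ to have the \emph{expected} codimension (nor any genericity for this): the Eagon--Northcott/Macaulay bound gives $\codim D\le s-(k-c)+1$ at every point of $D$ unconditionally, and $D\ni 0$ because all $g_{j,l}$ have positive degree; only this upper bound on the codimension is used. Second, the leftover case $c\le k-s$ is already covered uniformly by the determinantal argument (then $\codim D\le 1$, or $\operatorname{rank}G<k-c$ identically), so the separate reduction to $x_l\in(f_1,\ldots,f_k)$ is unnecessary, though it can also be completed by noting that the ideal of a quasi-smooth complete intersection is prime and differentiating the relation $x_l=\sum h_jf_j$ at the origin to force some $d_j=a_l$.
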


\begin{remark}
Theorem~\ref{theorem:cone-vs-wf} does not hold in dimensions less than $3$. A counterexample is
a hypersurface of degree $9$ in $\PP(1,2,2,3)$, which is smooth and quasi-smooth, but not well formed,
see~\cite[6.15(ii)]{IF00}.
\end{remark}

The following result shows that a general enough quasi-smooth weighted complete intersection
is isomorphic to a weighted complete intersection with nice properties.

\begin{proposition}
\label{proposition:unconization}
Let $X\subset \PP$ be a quasi-smooth weighted complete intersection of dimension at least $3$.
Assume that $X$ is general in the family of weighted complete intersections of the same multidegree in $\PP$.
Then there exists a quasi-smooth well formed weighted complete intersection $X'$ isomorphic to $X$ which is not an intersection with a linear cone.
\end{proposition}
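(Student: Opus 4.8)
The plan is to remove the linear-cone structure one variable at a time, interleaving this with the wellformization of Lemma~\ref{lemma:wellformization}, until Theorem~\ref{theorem:cone-vs-wf} applies. First I would reduce to the case in which $\PP$ is well formed: by Lemma~\ref{lemma:wellformization} we may replace $X$ by an isomorphic quasi-smooth weighted complete intersection of the same dimension and codimension, say $k$, in a well formed weighted projective space, and this replacement preserves generality. If the resulting variety is not an intersection with a linear cone, then Theorem~\ref{theorem:cone-vs-wf} applies, since its dimension is at least $3$ and it is quasi-smooth, general, and contained in a well formed ambient space; hence it is well formed and we are done.

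The main step is the unconization itself. Suppose $X\subset\PP(a_0,\ldots,a_N)$ has multidegree $(d_1,\ldots,d_k)$ with $d_j=a_i$ for some $i$ and $j$; after renumbering we may take $i=0$ and $j=1$. Since $\deg f_1=a_0=\deg x_0$, the only monomials of this weighted degree are $x_0$ itself and monomials in $x_1,\ldots,x_N$, so the defining equation has the form $f_1=c\,x_0+h(x_1,\ldots,x_N)$ with $c\in\CC$. For general $X$ the coefficient $c$ is nonzero, so $f_1=0$ expresses $x_0$ as a weighted homogeneous polynomial of degree $a_0$ in the remaining variables. Substituting this expression into $f_2,\ldots,f_k$ yields a weighted complete intersection $X'\subset\PP(a_1,\ldots,a_N)$ of multidegree $(d_2,\ldots,d_k)$, of codimension $k-1$ and the same dimension as $X$. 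The projection forgetting $x_0$ restricts to an isomorphism $X\to X'$, because $x_0$ is recovered from the other coordinates; on the level of affine cones this projection identifies the cone over $X$ with the cone over $X'$ as a graph, so quasi-smoothness of $X$ transfers to $X'$. Since the substitution map is surjective on the spaces of coefficients of the defining equations, a general $X$ produces a general $X'$.

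I would then iterate: wellformize $X'$ by Lemma~\ref{lemma:wellformization} and test again whether it is an intersection with a linear cone, unconizing whenever it is. Each unconization step lowers the codimension by one while keeping the dimension fixed and at least $3$, and each wellformization leaves the codimension unchanged; hence, since the codimension is a nonnegative integer, the process terminates after at most $k$ unconization steps. It ends with a quasi-smooth general weighted complete intersection in a well formed ambient space that is not an intersection with a linear cone, and Theorem~\ref{theorem:cone-vs-wf} then guarantees that this final variety is well formed, giving the desired $X'$. (In the degenerate case where the codimension drops to zero, the final variety is the well formed weighted projective space itself, which is vacuously not an intersection with a linear cone.)

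I expect the principal technical point to be the verification that the variable elimination preserves both quasi-smoothness and the property of being a general member of the corresponding family; the identification of the two affine cones as graphs of a single weighted homogeneous function is what makes this clean. The remainder is the bookkeeping of weights and degrees together with the termination argument driven by the strictly decreasing codimension.
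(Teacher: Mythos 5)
Your proposal is correct and follows essentially the same route as the paper: eliminate a variable whose weight coincides with one of the degrees (using generality to ensure the linear term appears), transfer quasi-smoothness via the graph/Jacobian observation, alternate this with the wellformization of Lemma~\ref{lemma:wellformization}, and invoke Theorem~\ref{theorem:cone-vs-wf} at the end. The only cosmetic difference is that you drive termination by the strictly decreasing codimension, whereas the paper uses the strictly decreasing dimension of the ambient weighted projective space; these are the same observation.
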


\begin{proof}
Suppose that $X$ is an intersection with a linear cone.
Let $(d_1,\ldots, d_k)$ be the multidegree of $X$. We may assume that~\mbox{$d_k=a_N$}.
Let~\mbox{$x_i$, $i=0,\ldots, N$}, be a weighted homogeneous coordinate of weight $a_i$ on $\PP$.
Since~$X$ is general in the family of weighted complete intersections of the same multidegree in $\PP$, there exists a homogeneous polynomial of degree~$d_k$ in the weighted homogeneous ideal
of $X$ of the form~\mbox{$f_k=f-x_N$}, where the polynomial~$f$ depends only on variables~\mbox{$x_0,\ldots, x_{N-1}$}.
Moreover, substituting $f$ instead of $x_N$ into the remaining polynomials $f_1,\ldots,f_{k-1}$ that generate the ideal of $X$,
one can assume that these polynomials do not depend on $x_{N}$.
Thus the natural projection
\begin{equation}
\begin{gathered}\label{eq:projection}
\PP(a_0,\ldots,a_{N-1}, a_N)\dasharrow \PP'=\PP(a_0,\ldots, a_{N-1}), \\ (x_0:\ldots:x_{N-1}:x_N)\mapsto
(x_0:\ldots:x_{N-1}),
\end{gathered}
\end{equation}
induces
the isomorphism of $X$ with a weighted complete intersection $X'$ of multidegree~\mbox{$(d_1,\ldots,d_{k-1})$}
in $\PP'$.
Quasi-smoothness of $X$ means that the Jacobian matrix of the polynomials defining $X$ is non-degenerate.
Since the variable $x_N$ appears linearly and only in $f_k$, quasi-smoothness of $X$
implies that $X'$ is quasi-smooth as well. By the assumption on the generality of $X$, we conclude that
$X'$ is a general weighted complete intersection of multidegree~\mbox{$(d_1,\ldots,d_{k-1})$}
in $\PP'$.

Repeating the above procedure sufficiently many times, we may assume that $X'$ is not an intersection with a linear cone.
Applying Lemma~\ref{lemma:wellformization}, we may assume that $X'$ is a quasi-smooth weighted complete intersection
in a well formed weighted projective space; however, as a result of the application of Lemma~\ref{lemma:wellformization},
it may become an intersection with a linear cone again. Alternating these two steps, we will
finally arrive to a quasi-smooth weighted complete intersection $X'$ in a well formed weighted projective space that is
not an intersection with a linear cone, because projection~\eqref{eq:projection} decreases the dimension of the ambient
weighted projective space, while the application of Lemma~\ref{lemma:wellformization} leaves it the same.
It remains to apply Theorem~\ref{theorem:cone-vs-wf} to conclude that $X'$ is well formed.
\end{proof}

If a weighted complete intersection $X$ is smooth and well formed, then
$X$ is quasi-smooth by Remark~\ref{remark:smooth}.
However a priori we cannot assume smoothness instead of quasi-smoothness in Proposition~\ref{proposition:unconization}
since $X$ is not necessarily well formed.

\begin{question}
Does Proposition~\ref{proposition:unconization} nevertheless hold if we replace quasi-smoothness by smoothness?
\end{question}

\section{Proof of the main result}
\label{section:proof}

In this section we prove Theorem~\ref{theorem:big_codim}.
By $\PP$ we will always denote the weighted projective space
$\PP(a_0,\ldots, a_N)$. We will say that a weighted complete intersection~\mbox{$X\subset\P$}
of multidegree~\mbox{$(d_1,\ldots,d_k)$} is \emph{normalized} if the inequalities~\mbox{$a_0\le\ldots\le a_N$} and $d_1\le\ldots\le d_k$ hold. Recall that if $X\subset\PP$ is a smooth well formed Fano weighted complete intersection of
multidegree $(d_1,\ldots,d_k)$ and dimension at least $2$, then $i_X=\sum a_i-\sum d_j$,
see~\mbox{\cite[Corollary~2.8]{PSh19}}. Note that
in a more general situation the following version of adjunction
formula holds.

\begin{theorem}[{see~\cite[Theorem 3.3.4]{Do82}, \cite[6.14]{IF00}}]
\label{theorem:adjunction}
Let $X$ be a quasi-smooth
well formed weighted complete intersection.
Let $\omega_X$ be the dualizing sheaf on $X$.
Then
$$
\omega_X=\O_X\left(-i_X\right).
$$
\end{theorem}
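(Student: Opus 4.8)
The plan is to identify $i_X$ with the integer $\sum a_i-\sum d_j$ attached to the multidegree (in agreement with the Fano index formula recalled above, although in this generality this integer need not be positive) and to prove the equivalent statement $\omega_X\cong\O_X\big(\sum d_j-\sum a_i\big)$. The whole argument rests on the observation that both sides are reflexive sheaves of rank one on the normal variety $X$, so that it is enough to produce an isomorphism over an open subset whose complement has codimension at least $2$. Since $X$ is a quasi-smooth well formed weighted complete intersection, it has at worst cyclic quotient singularities, hence is normal and Cohen--Macaulay; in particular the dualizing sheaf $\omega_X$ coincides with $j_*\omega_{X_{\mathrm{sm}}}$, where $j\colon X_{\mathrm{sm}}\hookrightarrow X$ is the inclusion of the smooth locus, and likewise $\O_X(m)=j_*\big(\O_X(m)|_{X_{\mathrm{sm}}}\big)$. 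Thus it suffices to prove the isomorphism after restriction to $X_{\mathrm{sm}}$.

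First I would check that the complement of $X_{\mathrm{sm}}$ has the right codimension. By Theorem~\ref{theorem:Sing-WF} one has $\Sing X=X\cap\Sing\PP$, and well formedness of $X$ gives $\codim_X\big(X\cap\Sing\PP\big)\ge 2$. Hence $X_{\mathrm{sm}}=X\cap\PP_{\mathrm{sm}}$, where $\PP_{\mathrm{sm}}$ is the smooth locus of $\PP$, and the complement of $X_{\mathrm{sm}}$ in $X$ has codimension at least $2$, exactly as needed for the Hartogs-type extension above.

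The core computation takes place on $\PP_{\mathrm{sm}}$. Here I would first establish the weighted analogue of the Euler sequence, namely $\omega_{\PP_{\mathrm{sm}}}\cong\O_\PP\big(-\sum a_i\big)\big|_{\PP_{\mathrm{sm}}}$; the cleanest route is to pass to the punctured affine cone $p\colon\A^{N+1}\setminus\{0\}\to\PP$, on which the top form $dx_0\wedge\ldots\wedge dx_N$ trivializes $\omega_{\A^{N+1}}$ and carries $\CC^*$-weight $\sum a_i$ under the action $t\cdot x_i=t^{a_i}x_i$, and then to descend through $p$ the contraction of this form with the Euler vector field $\sum a_i x_i\partial_{x_i}$, keeping track of the $\CC^*$-weights. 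Granting this, $X_{\mathrm{sm}}$ is a genuinely smooth subvariety of the smooth variety $\PP_{\mathrm{sm}}$, cut out by the regular sequence $f_1,\ldots,f_k$ (quasi-smoothness guarantees that the Jacobian has full rank there), so its normal bundle is $\bigoplus_j\O_X(d_j)|_{X_{\mathrm{sm}}}$ and ordinary adjunction for smooth varieties yields
\[
\omega_{X_{\mathrm{sm}}}\cong\omega_{\PP_{\mathrm{sm}}}\big|_{X_{\mathrm{sm}}}\otimes\bigotimes_{j=1}^{k}\O_X(d_j)\big|_{X_{\mathrm{sm}}}\cong\O_X\Big(\sum d_j-\sum a_i\Big)\Big|_{X_{\mathrm{sm}}}.
\]
Combining this with the reflexivity reduction of the first paragraph gives $\omega_X\cong\O_X\big(\sum d_j-\sum a_i\big)=\O_X(-i_X)$.

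I expect the main obstacle to be the equivariant computation of $\omega_{\PP_{\mathrm{sm}}}$, i.e.\ justifying that the weighting of the variables shifts the canonical sheaf of the weighted projective space by exactly $-\sum a_i$: this is where the singularities of $\PP$ must be handled with care and where the bookkeeping of $\CC^*$-weights under the contraction with the Euler field is delicate. An alternative that sidesteps the geometry is the purely commutative-algebra computation: the weighted polynomial ring $S=\CC[x_0,\ldots,x_N]$ is Gorenstein with graded canonical module $S\big(-\sum a_i\big)$, so for the complete intersection $R=S/(f_1,\ldots,f_k)$ one gets $\omega_R\cong R\big(\sum d_j-\sum a_i\big)$, and sheafifying over $\Proj R=X$ recovers the same formula; here too one must invoke normality and the Cohen--Macaulay property to identify the sheafification of $\omega_R$ with the dualizing sheaf $\omega_X$. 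In either approach, the verification of the codimension-two condition and of reflexivity, already furnished by Theorem~\ref{theorem:Sing-WF} together with well formedness, is precisely what makes the extension to all of $X$ legitimate.
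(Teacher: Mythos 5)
There is nothing in the paper to compare your argument with: Theorem~\ref{theorem:adjunction} is quoted from \cite[Theorem~3.3.4]{Do82} and \cite[6.14]{IF00} without proof, so your proposal has to be measured against the literature. By that standard it is correct, and in fact it contains both of the standard arguments: your main geometric route (compute $\omega_{\PP}$ on the smooth locus by the Euler-field contraction, apply ordinary adjunction to the smooth subvariety $X\cap\PP_{\mathrm{sm}}$, and extend across the codimension-two set $X\cap\Sing\PP$ furnished by well formedness together with Theorem~\ref{theorem:Sing-WF}), and the graded commutative-algebra route via $\omega_R\cong R\bigl(\sum d_j-\sum a_i\bigr)$, which is essentially Dolgachev's own argument. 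Your reading of $i_X$ as $\sum a_i-\sum d_j$, which need not be positive here, is also the intended one.

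The one place where you are too quick is the word ``likewise''. The extension property $\O_X(m)=j_*\bigl(\O_X(m)|_{X_{\mathrm{sm}}}\bigr)$ is not a consequence of $X$ being normal and Cohen--Macaulay: it is a property of the particular coherent sheaves $\O_X(m)$ rather than of the variety $X$, and a priori the sheafification of the graded module $R(m)$ could fail to be $S_2$. It must be deduced from the construction: $\O_X(m)$ is the weight-$m$ eigensheaf of $\pi_*\O_{p^{-1}(X)}$, where $\pi\colon p^{-1}(X)\to X$ is the affine quotient map from the punctured cone; the cone is Cohen--Macaulay (being a complete intersection), so its structure sheaf has depth at least $2$ along closed subsets of codimension at least $2$, and this descends through $\pi$ because $\pi$ is equidimensional with one-dimensional fibers, so preimages of codimension-$\ge 2$ subsets again have codimension $\ge 2$. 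This step is the crux of your whole strategy --- it is precisely where well formedness enters, and without it the reduction to $X_{\mathrm{sm}}$ proves nothing, as the restriction--extension argument breaks down when $X\cap\Sing\PP$ has codimension one. With that point argued rather than asserted, both of your routes are complete.
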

In particular, Theorem~\ref{theorem:adjunction} shows that
if $X$ is a quasi-smooth well formed weighted complete intersection of arbitrary dimension, then
$X$ is Fano if and only if $i_X>0$.

In the proof of Theorem~\ref{theorem:big_codim} we will use the following auxiliary results.

\begin{theorem}[{see \cite[Theorem~1.2]{PST}}]
\label{theorem:PST}
Suppose that $X\subset\P$ is a normalized smooth well formed Fano or Calabi--Yau weighted complete intersection of codimension $k$ that is not an intersection with a linear cone. Then  $a_{k-1}=1$. Moreover, a general element of the linear system $|\O_X(1)|$ is smooth.
\end{theorem}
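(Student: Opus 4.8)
The plan is to establish the two assertions in turn, treating the equality $a_{k-1}=1$ as the substantial part and deducing the smoothness of the generic hyperplane section afterwards. Write $(d_1,\dots,d_k)$ for the multidegree, with $d_1\le\dots\le d_k$; by Remark~\ref{remark:smooth} the variety $X$ is quasi-smooth, and since $X$ is Fano or Calabi--Yau one has $i_X\ge 0$, that is $\sum_i a_i\ge\sum_j d_j$. As the weights are ordered, $a_{k-1}=1$ is equivalent to the assertion that at least $k$ of them equal $1$; putting $m=\#\{i:a_i=1\}$, the goal of the first part is thus to prove $m\ge k$.

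I would extract two numerical constraints. First, since $X$ is smooth and well formed, Theorem~\ref{theorem:Sing-WF} gives $\Sing X=X\cap\Sing\PP=\emptyset$. For a prime $p$ dividing at least two of the weights, the coordinate stratum $\Sigma_p=\{x_i=0:p\nmid a_i\}\cong\PP(\{a_i:p\mid a_i\})$ consists of points whose stabiliser contains $\mu_p$, so $\Sigma_p\subset\Sing\PP$; as any two closed subvarieties of $\PP$ whose dimensions add up to at least $N$ must meet (the rational Chow groups being generated by powers of the hyperplane class), the emptiness of $X\cap\Sigma_p$ forces $\dim\Sigma_p<k$, i.e.
\[
\#\{i:p\mid a_i\}\le k\qquad\text{for every prime }p.
\]
Second, Lemma~\ref{lemma:smoothness-criterion}, applied to the $r_p:=\#\{i:p\mid a_i\}$ weights divisible by $p$, whose greatest common divisor is a multiple of $p$, yields at least $r_p$ of the degrees divisible by $p$; thus $\#\{j:p\mid d_j\}\ge r_p$, while there are only $k$ degrees in total.

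Now suppose $m\le k-1$, so that at least $n+2$ weights are $\ge 2$. The point is that the divisibility just obtained forces the few degrees $d_1,\dots,d_k$ to be multiples of large numbers: each prime dividing a weight must divide one of the degrees, and since $X$ is not an intersection with a linear cone no $d_j$ equals a weight, so every forced divisibility genuinely enlarges the degree. One then compares the resulting lower bound on $\prod_j d_j$ (coming from $d_j\ge\prod_{p\mid d_j}p$ together with $\sum_p(\log p)\,\#\{j:p\mid d_j\}\ge\sum_{a_i\ge 2}\log\mathrm{rad}(a_i)$) against the Fano/Calabi--Yau bound $\sum_j d_j\le\sum_i a_i$ and the ceiling $r_p\le k$, in order to reach $\sum_j d_j>\sum_i a_i$, a contradiction. \textbf{This last estimate is the main obstacle.} Turning the prime-by-prime information into the single inequality $\sum_j d_j>\sum_i a_i$ is delicate when $k$ is large, and I would carry it out by a case analysis according to how close the various $r_p$ are to $k$ (the primes with $r_p=k$ forcing every degree to be divisible by their product); it is precisely here that both hypotheses, positivity of the index and the absence of a linear cone, are indispensable. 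Having reached a contradiction, I conclude $m\ge k$, hence $a_{k-1}=1$.

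For the second assertion, the equalities $a_0=\dots=a_{k-1}=1$ show that $|\O_X(1)|$ is spanned by the weight-one coordinates, with base locus $B=X\cap\Pi$ for $\Pi=\{x_i=0:a_i=1\}$. By Bertini's theorem in characteristic zero a general member is smooth away from $B$, which settles the case $B=\emptyset$. To handle $B$, after a general linear change of the weight-one coordinates I may take the member to be $X\cap\{x_0=0\}$, a weighted complete intersection of the same multidegree in $\PP(a_1,\dots,a_N)$; I would check that for general $x_0$ it remains quasi-smooth (since $x_0$ enters transversally, this follows from quasi-smoothness of $X$) and that $X$ meets the general member transversally along $B$, using the smoothness of $X$ and $X\cap\Sing\PP=\emptyset$. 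I expect this last step to be routine in comparison with the numerical estimate of the first part.
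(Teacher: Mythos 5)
This statement is not proved in the paper at all: it is quoted as a black box from \cite[Theorem~1.2]{PST}, where it is the main result of that paper. So your attempt must stand on its own, and it does not: at the decisive moment you write that the estimate ``is the main obstacle'' and replace the proof by a promise of ``a case analysis according to how close the various $r_p$ are to $k$.'' Everything you establish before that point is correct but is only the standard starting data: the bound $r_p\le k$ (which, incidentally, already follows from Lemma~\ref{lemma:smoothness-criterion} alone, with no intersection theory, since there are only $k$ degrees available to be divisible by $p$), the bound $\#\{j: p\mid d_j\}\ge r_p$, the constraint $d_j\ne a_i$, and the inequality $\sum_j d_j\le\sum_i a_i$. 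Deducing from these that at least $k$ weights equal $1$ is precisely the hard content of \cite{PST}, and it is not clear that your particular scheme --- lower-bounding $\prod_j d_j$ via radicals and comparing with the linear inequality $\sum_j d_j\le\sum_i a_i$ --- can be pushed through: the primes dividing the weights may be spread across many weights while concentrated in few degrees, and the arguments that are known to work use finer term-by-term inputs such as $d_{k-j}>a_{N-j}$ (Theorem~\ref{theorem:deltas}) and $d_k\ge 2a_N$ (Lemma~\ref{lemma:last_weight}); indeed this is exactly how the present paper exploits Theorem~\ref{theorem:PST} in Lemmas~\ref{lemma:Fano_dim-codim} and~\ref{lemma:Fano_dim-codim-next}. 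A sketch that defers the central estimate is not a proof.

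The second half has the same defect in milder form. Bertini does give smoothness of a general member away from the base locus $B=X\cap\{x_i=0 : a_i=1\}$, but the behaviour along $B$ is the actual issue, and ``since $x_0$ enters transversally, this follows from quasi-smoothness of $X$'' is an assertion, not an argument: quasi-smoothness of the affine cone over $X$ does not by itself control the singularities of the honest variety $X\cap\{x_0=0\}$ at points of $B$, where one must also rule out singularities picked up from the singular locus of the smaller weighted projective space $\PP(a_1,\ldots,a_N)$ --- compare the care taken in Lemma~\ref{lemma:PST} of the paper just to verify well-formedness of such a section. In sum, both halves of your proposal are plans rather than proofs, and the first one omits essentially the entire difficulty of the theorem.
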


The following result is implicitly contained in \cite{PST}. We prove it for the reader's convenience.

\begin{lemma}
\label{lemma:PST}
Suppose that $X\subset\P$ is a normalized smooth well formed Fano weighted complete intersection
of dimension $n$ multidegree $(d_1,\ldots,d_k)$ that is not an intersection with a linear cone.
Let $X'$ be a general element of the linear system $|\O_X(1)|$. Then~$X'$
is a well formed weighted complete intersection of multidegree $(d_1,\ldots,d_k)$ in~\mbox{$\PP'=\PP(a_1,\ldots, a_N)$}.
\end{lemma}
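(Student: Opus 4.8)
The plan is to exhibit $X'$ as an explicit coordinate hyperplane section of $X$ and then verify the three ingredients of the conclusion — that $X'$ is a complete intersection of the prescribed multidegree, that $\PP'$ is well formed, and that $X'$ meets $\Sing\PP'$ in codimension at least $2$ — one at a time. By Theorem~\ref{theorem:PST} we have $a_{k-1}=1$, and since the weights are non-decreasing this forces $a_0=1$. Thus $x_0$ is a weight-one coordinate and sections of $\O_\PP(1)$ are spanned by the weight-one variables, so after applying an automorphism of $\PP$ induced by a linear change of the weight-one variables I may assume that the general member $X'$ of $|\O_X(1)|$ equals $X\cap\{x_0=0\}$. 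Because $a_0=1$, the hyperplane $\{x_0=0\}=\Proj\CC[x_1,\ldots,x_N]$ is precisely $\PP'=\PP(a_1,\ldots,a_N)$, and $X'$ is cut out in it by the restrictions $\bar f_j=f_j|_{x_0=0}$ of a regular sequence $f_1,\ldots,f_k$ defining $X$.

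First I would check that $X'$ is a weighted complete intersection of multidegree $(d_1,\ldots,d_k)$. Set-theoretically $X'=V(\bar f_1,\ldots,\bar f_k)\subset\PP'$. By Theorem~\ref{theorem:PST} the variety $X'$ is smooth of dimension $n-1$, hence of codimension $k$ in the Cohen--Macaulay space $\PP'$; being cut out by the $k$ equations $\bar f_j$, these must form a regular sequence. Transversality of the general section $\{x_0=0\}$ along the smooth locus of $X'$ shows that $V(\bar f_1,\ldots,\bar f_k)$ is generically reduced, and a complete intersection has no embedded components, so it is reduced and equal to $X'$. Hence $(\bar f_1,\ldots,\bar f_k)$ is the homogeneous ideal of $X'$ and $X'$ has multidegree $(d_1,\ldots,d_k)$.

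The hard part is showing that $\PP'$ is well formed. Suppose it is not; then some prime $p$ divides all but at most one of the weights $a_1,\ldots,a_N$, so at least $N-1$ of these weights have greatest common divisor divisible by $p>1$. Applying Lemma~\ref{lemma:smoothness-criterion} to the smooth well formed $X\subset\PP$ with $r=N-1$ of these weights, I obtain $N-1$ of the degrees $d_1,\ldots,d_k$ whose greatest common divisor is divisible by $p$; in particular $N-1\le k$. But $X$ has dimension $n$ and codimension $k$, so $N=n+k$, and the inequality becomes $n\le 1$, contradicting $n\ge 2$. Therefore $\PP'$ is well formed.

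Finally, well formedness of $X'$ requires $\codim_{X'}\bigl(X'\cap\Sing\PP'\bigr)\ge 2$, and here smoothness does all the work. Since $X$ is smooth and well formed, Theorem~\ref{theorem:Sing-WF} gives $X\cap\Sing\PP=\Sing X=\emptyset$. The description of the singular locus of a well formed weighted projective space in terms of greatest common divisors of the weights of the nonvanishing coordinates shows that, under the identification $\PP'=\{x_0=0\}$, one has $\Sing\PP'=\Sing\PP\cap\PP'\subseteq\Sing\PP$. As $X'\subseteq X$, it follows that $X'\cap\Sing\PP'\subseteq X\cap\Sing\PP=\emptyset$, so the codimension condition holds vacuously. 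Combined with the two previous paragraphs, this shows that $X'$ is a well formed weighted complete intersection of multidegree $(d_1,\ldots,d_k)$ in $\PP'=\PP(a_1,\ldots,a_N)$, as claimed.
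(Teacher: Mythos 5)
Your proof is essentially correct, and its central step is genuinely different from the paper's. For the well-formedness of $\PP'$, the paper argues geometrically: if $N-1$ of the weights $a_1,\ldots,a_N$ had a common divisor, the stratum $\Lambda=\{x_0=x_1=0\}$ of dimension $N-2$ would lie in $\Sing\PP$, would meet $X$ (since $\dim X\ge 2$), and by Theorem~\ref{theorem:Sing-WF} would force $X$ to be singular. You instead feed the $N-1$ offending weights into Lemma~\ref{lemma:smoothness-criterion} and extract $N-1$ distinct degrees, so that $N-1\le k$, i.e.\ $n\le 1$ --- a clean purely arithmetic contradiction. Both arguments are valid and both ultimately rest on $n\ge 2$; yours has the advantage of avoiding any discussion of the stratification of $\Sing\PP$, while the paper's has the advantage of reusing machinery (Theorem~\ref{theorem:Sing-WF} and \cite[5.15]{IF00}) that is needed anyway for the last step, which you and the paper handle identically ($X'\cap\Sing\PP'\subseteq X\cap\Sing\PP=\emptyset$). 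Your verification that $(\bar f_1,\ldots,\bar f_k)$ is a regular sequence cutting out $X'$ is more detailed than the paper's, which simply cites \cite[Corollary~3.3]{PSh19}.

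Two small points you should patch. First, the lemma as stated allows $n=1$, and your key inequality only yields a contradiction when $n\ge 2$; the paper dispatches $n=1$ separately by the classification of one-dimensional smooth well formed Fano weighted complete intersections (\cite[Lemma~2.6]{PSh19}), and you need to do the same or restrict to $n\ge2$ explicitly. Second, your reduction of a general member of $|\O_X(1)|$ to the coordinate section $X\cap\{x_0=0\}$ tacitly assumes that every element of $|\O_X(1)|$ is cut out by a weighted hypersurface of degree $1$ in $\PP$, i.e.\ that the restriction map $H^0(\PP,\O_\PP(1))\to H^0(X,\O_X(1))$ is surjective; observing that $H^0(\PP,\O_\PP(1))$ is spanned by the weight-one variables does not by itself give this. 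The paper cites \cite[Corollary~3.3]{PSh19} for exactly this fact, and you should too.
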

\begin{proof}
By the classification of one-dimensional smooth well formed Fano weighted complete intersections
(see for instance \cite[Lemma~2.6]{PSh19}), we may assume that $\dim X\ge 2$.

The linear system $|\O_X(1)|$ is non-empty by Theorem~\ref{theorem:PST}, and
every element of~\mbox{$|\O_X(1)|$} is cut out on $X$ by a weighted hypersurface of degree $1$ in $\PP$
(see, for instance,~\mbox{\cite[Corollary~3.3]{PSh19}}). Therefore, one can consider $X'$ as a weighted complete intersection of multidegree $(d_1,\ldots,d_k)$ in $\PP'$.

Suppose that the weighted projective space $\PP'$ is not well formed.
Then there are $N-1$ numbers among $a_1,\ldots,a_N$ such that their greatest common divisor is
at least $2$. We may assume that these are $a_2,\ldots,a_N$.
Let $x_i$ be the weighted homogeneous coordinate of weight $a_i$ on $\PP$, and let $\Lambda\subset\PP$
be the subset given by $x_0=x_1=0$. Then $\dim\Lambda=N-2$. Since $\dim X\ge 2$, the intersection $X\cap\Lambda$
is not empty. On the other hand, by~\cite[5.15]{IF00}
the weighted projective space $\PP$ is singular along $\Lambda$, and by Theorem~\ref{theorem:Sing-WF}
the weighted complete intersection $X$ is singular along
$X\cap\Lambda$. The obtained contradiction shows that~$\PP'$ is well formed.

By \cite[5.15]{IF00} one has $\Sing\PP'\subset\Sing\PP$. On the other hand,
since $X$ is smooth and well formed, it is disjoint from $\Sing\PP$ by Theorem~\ref{theorem:Sing-WF}.
Hence $X'$ is disjoint from  $\Sing\PP$, and thus also from $\Sing\PP'$,
which implies that $X'$ is well formed.
\end{proof}

\begin{corollary}
\label{corollary:PST}
Suppose that $X\subset\P$ is a normalized smooth well formed Fano or Calabi--Yau weighted complete intersection of codimension $k$ that is not an intersection with a linear cone. Then
$a_{k+i_X-1}=1$.
\end{corollary}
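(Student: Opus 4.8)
The plan is to argue by induction on the index $i_X=\sum_i a_i-\sum_j d_j$, which equals $0$ in the Calabi--Yau case and is positive in the Fano case. The base case $i_X=0$ is precisely Theorem~\ref{theorem:PST}, which gives $a_{k-1}=1$. For the inductive step I would pass from $X$ to a general hyperplane section and show that it satisfies the same hypotheses with the index lowered by one.

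So suppose $i_X\ge 1$. By Theorem~\ref{theorem:PST} we have $a_{k-1}=1$, and since $X$ is normalized this forces $a_0=1$. Let $X'$ be a general element of the linear system $|\O_X(1)|$. By Theorem~\ref{theorem:PST} the variety $X'$ is smooth, and by Lemma~\ref{lemma:PST} it is a well formed weighted complete intersection of multidegree $(d_1,\ldots,d_k)$ in $\PP'=\PP(a_1,\ldots,a_N)$; in particular it has codimension $k$ in $\PP'$. The key point is that $X'$ again falls under the hypotheses of the corollary. Indeed, $X'$ is normalized because the inequalities $a_1\le\ldots\le a_N$ and $d_1\le\ldots\le d_k$ still hold; it is not an intersection with a linear cone, since no $d_j$ equals any of $a_1,\ldots,a_N$, these being among the weights of $\PP$, none of which equals a $d_j$; and, computing the index of $X'$ by the adjunction formula of Theorem~\ref{theorem:adjunction} (which applies because a smooth well formed weighted complete intersection is quasi-smooth by Remark~\ref{remark:smooth}), one finds
$$
i_{X'}=\sum_{i=1}^N a_i-\sum_{j=1}^k d_j=i_X-a_0=i_X-1\ge 0,
$$
so that $X'$ is Fano if $i_{X'}>0$ and Calabi--Yau if $i_{X'}=0$.

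It then remains to apply the inductive hypothesis to $X'$: the weight sitting in position $k+i_{X'}-1=k+i_X-2$ of the list $(a_1,\ldots,a_N)$ equals $1$. Re-indexing this list as the weights of $\PP'$, this weight is $a_{(k+i_X-2)+1}=a_{k+i_X-1}$, which is therefore equal to $1$, as required. The main thing to check carefully is that the general hyperplane section $X'$ genuinely inherits every hypothesis of the corollary---smoothness from Theorem~\ref{theorem:PST}, well-formedness and the description of the ambient space from Lemma~\ref{lemma:PST}, and the decrease of the index by exactly $a_0=1$ from the adjunction formula. The only potentially subtle point, namely whether passing to $X'$ could create an intersection with a linear cone, does not arise, since the multidegree is unchanged and the weights of $\PP'$ form a subset of those of $\PP$.
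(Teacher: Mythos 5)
Your argument is the paper's own proof, reorganized as an induction on the index: the paper applies Theorem~\ref{theorem:PST} consecutively $i_X$ times to general elements of $|\O_X(1)|$, checking at each step (via Lemma~\ref{lemma:PST} and Theorem~\ref{theorem:adjunction}) exactly the inheritance of hypotheses that you check in your inductive step. That core is correct.

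However, there is a boundary case your induction does not survive. Each step drops the dimension by one, so starting from $(\dim X, i_X)=(n,i)$ the induction terminates at a Calabi--Yau of dimension $n-i$; if $i_X\ge n$ --- and such $X$ exist, e.g.\ a quadric hypersurface has $i_X=n$, and a smooth Fano curve is $\P^1$ with $i_X=2>1$ --- you would have to pass through varieties of dimension $\le 0$, where Lemma~\ref{lemma:PST}, the identity $i_X=\sum a_i-\sum d_j$ (which holds for smooth well formed Fano weighted complete intersections of dimension at least $2$ by \cite[Corollary~2.8]{PSh19}), and indeed the notion of Fano or Calabi--Yau variety all cease to apply. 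The paper removes these cases before iterating: it invokes the classification of one-dimensional smooth well formed Fano weighted complete intersections (\cite[Lemma~2.6]{PSh19}) and of those with $i_X\ge n$ (\cite[Corollary~3.8(i),(ii)]{PSh19}) so as to assume $\dim X\ge 2$ and $i_X\le n-1$, after which every intermediate section has dimension at least $1$. You need the same preliminary reduction. A related small point: your inductive hypothesis is stated in terms of the Fano index of $X'$, whereas adjunction only gives you the number $\sum_{i\ge 1}a_i-\sum_j d_j$; these coincide by \cite[Corollary~2.8]{PSh19} only once $\dim X'\ge 2$, which again is guaranteed only after the reduction above (alternatively, one can note that a larger index together with the ordering of the weights would still give the desired conclusion).
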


\begin{proof}
If $X$ is Calabi--Yau, the assertion follows from Theorem~\ref{theorem:PST}.
Thus, we assume that $X$ is Fano. Let $n=\dim X$.
By  \cite[Lemma~2.6]{PSh19}, we may assume that $\dim X\ge 2$.
Furthermore, by \cite[Corollary~3.8(i),(ii)]{PSh19}, we may assume that $i_X\le n-1$.

Applying Theorem~\ref{theorem:PST} consecutively $i_X$ times and keeping in mind that
a general element of $|\O_X(1)|$ is well formed by Lemma~\ref{lemma:PST},
we see that $a_0=\ldots=a_{i_X-1}=1$, and
there exists a normalized smooth well formed Calabi--Yau
weighted complete intersection of codimension $k$ in
$\PP(a_{i_X},\ldots,a_N)$, cf. Theorem~\ref{theorem:adjunction}. Also, the obtained weighted
complete intersection is not an intersection with a linear cone. It remains to apply
Theorem~\ref{theorem:PST} once again to conclude that $a_{i_X}=\ldots=a_{k+i_X-1}=1$.
\end{proof}

\begin{theorem}[{\cite[Lemma 18.4]{IF00}}]
\label{theorem:deltas}
Suppose that $X\subset\P$ is a normalized quasi-smooth well formed weighted complete intersection of multidegree $(d_1,\ldots,d_k)$ that is not an intersection with a linear cone.
Then $d_{k-j}> a_{N-j}$ for all $1\le j\le k$.
\end{theorem}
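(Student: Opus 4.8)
The plan is to argue by contradiction, using the Jacobian criterion for quasi-smoothness: since the ideal of $X$ is generated by a regular sequence $f_1,\ldots,f_k$ with $\deg f_l=d_l$, the preimage $p^{-1}(X)$ is a complete intersection of codimension $k$, so by Definition~\ref{definition: quasi-smoothness} quasi-smoothness means exactly that the Jacobian matrix $\big(\partial f_l/\partial x_m\big)$ has rank $k$ at every point of the punctured affine cone $p^{-1}(X)$. It is convenient to set $i=k-j$ and prove the inequalities $d_i>a_{N-k+i}$ for all $1\le i\le k$; these contain the asserted inequalities $d_{k-j}>a_{N-j}$ for every $j$ for which $d_{k-j}$ is defined, that is, $k-j\ge 1$. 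Because $X$ is not an intersection with a linear cone we have $d_l\ne a_m$ for all $l$ and $m$, so it is enough to rule out the strict inequality $d_i<a_{N-k+i}$.

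So suppose $d_i<a_{N-k+i}$ for some $i$. For every $l\le i$ and every $m\ge N-k+i$ one has $d_l\le d_i<a_{N-k+i}\le a_m$, so no monomial of weighted degree $d_l$ can involve $x_m$; hence $f_1,\ldots,f_i\in\CC[x_0,\ldots,x_{N-k+i-1}]$. I would then consider the coordinate subspace $\Lambda=\{x_0=\ldots=x_{N-k+i-1}=0\}$, which is a weighted projective space $\P(a_{N-k+i},\ldots,a_N)$ of dimension $k-i$. By construction $f_1,\ldots,f_i$ vanish identically on $\Lambda$, so $X\cap\Lambda$ is cut out in $\Lambda$ by the $k-i$ polynomials $f_{i+1}|_\Lambda,\ldots,f_k|_\Lambda$, and is nonempty since $k-i$ hypersurfaces in the $(k-i)$-dimensional projective variety $\Lambda$ necessarily meet. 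I choose a point $P\in X\cap\Lambda$ and lift it to a nonzero point of $p^{-1}(X)$.

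The key computation is to evaluate the Jacobian at $P$ and to see that its first $i$ rows vanish. For $l\le i$ the entry $\partial f_l/\partial x_m$ is identically zero when $m\ge N-k+i$, since $f_l$ does not involve $x_m$; and when $m\le N-k+i-1$ it is a weighted homogeneous polynomial in $x_0,\ldots,x_{N-k+i-1}$ of degree $d_l-a_m$, which is nonzero because $d_l\ne a_m$, hence has no constant term and so vanishes at $P$, all of whose coordinates $x_0,\ldots,x_{N-k+i-1}$ are equal to $0$. Thus the Jacobian at $P$ has rank at most $k-i<k$, so $p^{-1}(X)$ is singular at $P$, contradicting quasi-smoothness. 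This forces $d_i\ge a_{N-k+i}$, and then $d_i>a_{N-k+i}$, since equality would make $X$ an intersection with a linear cone.

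The delicate points, and the place I expect the real work to lie, are the bookkeeping choices that make the two halves of the argument fit together: the subspace $\Lambda$ must be taken large enough that the leftover $k-i$ equations still leave a nonempty intersection, yet small enough that the low-degree equations $f_1,\ldots,f_i$ are genuinely free of the coordinates surviving on it. The single indispensable use of the hypothesis that $X$ is not an intersection with a linear cone is the assertion that each surviving Jacobian entry $\partial f_l/\partial x_m$ has strictly positive degree; were some $d_l$ equal to some $a_m$, this entry could be a nonzero constant coming from a genuine linear term, the corresponding row would not vanish, and the rank estimate would collapse. This is precisely why the non-linear-cone assumption cannot be dropped.
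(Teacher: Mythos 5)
Your argument is correct, and since the paper cites this statement from \cite[Lemma 18.4]{IF00} without reproducing a proof, there is nothing in the paper to diverge from: your coordinate-stratum/Jacobian argument (if $d_i<a_{N-k+i}$ then $f_1,\ldots,f_i$ miss the last variables, the stratum $\{x_0=\cdots=x_{N-k+i-1}=0\}$ meets $X$ for dimension reasons, and the first $i$ rows of the Jacobian vanish there) is exactly the standard proof from the cited sources. Your reindexing is also the right reading of the statement, since as printed the range $1\le j\le k$ includes the undefined case $d_0$ while omitting the case $d_k>a_N$ that the paper actually uses later; proving $d_i>a_{N-k+i}$ for all $1\le i\le k$ is what is needed.
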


\begin{lemma}[{see~\cite[Proposition 3.1(1)]{ChenChenChen}}]
\label{lemma:last_weight}
Suppose that $X\subset\P$ is a normalized quasi-smooth  well formed weighted complete intersection
of multidegree $(d_1,\ldots,d_k)$  that is not an intersection with a linear cone. Then $d_{k}\ge 2a_{N}$.
\end{lemma}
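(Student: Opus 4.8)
The plan is to upgrade the bound $d_k>a_N$ provided by Theorem~\ref{theorem:deltas} to the full factor of two by inspecting $X$ near the coordinate point $P_N=(0:\ldots:0:1)$ attached to the variable $x_N$ of largest weight. If $a_N=1$, then all weights equal $1$, so $\PP=\P^N$; as $X$ is not an intersection with a linear cone we have $d_j\neq 1$ for every $j$, hence $d_k\ge 2=2a_N$ and the lemma holds. From now on I assume $a_N\ge 2$, so that $P_N\in\Sing\PP$ by~\cite[5.15]{IF00}.

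The decisive alternative is whether $P_N\in X$. Evaluating the weighted homogeneous generators $f_1,\ldots,f_k$ at the cone point $(0,\ldots,0,1)$, the point $P_N$ fails to lie on $X$ exactly when some $f_j$ contains a nonzero pure power $x_N^{m}$, that is, a monomial of degree $m a_N=d_j$ in the single variable $x_N$. Suppose this happens. Then $d_j=m a_N$ for a positive integer $m$, and since $X$ is not an intersection with a linear cone we have $d_j\neq a_N$, forcing $m\ge 2$; therefore $d_k\ge d_j=m a_N\ge 2a_N$, as desired. Thus the case $P_N\notin X$ is immediate.

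The case $P_N\in X$ is the step I expect to be the main obstacle. Here no $f_j$ has a pure power of $x_N$, so at the cone point over $P_N$ the $x_N$-column of the Jacobian of $(f_1,\ldots,f_k)$ vanishes, and the rank $k$ demanded by quasi-smoothness must be realized through monomials $x_\ell x_N^{s}$ with $\ell<N$ and $s\ge 1$, lying in $k$ distinct equations and spanning $k$ distinct coordinate directions. By itself this only yields $d_j=a_\ell+s a_N\ge a_N+a_\ell$, which is weaker than $2a_N$ precisely when $a_\ell<a_N$ and $s=1$; so one must either force $s\ge 2$ for one of these monomials or exclude $P_N\in X$ altogether, and sharpening the rank analysis at $P_N$ (and along the one-dimensional strata through it) so as to eliminate the configurations in which every generator is merely linear in $x_N$ is the delicate point — this is presumably the heart of~\cite[Proposition~3.1(1)]{ChenChenChen}. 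I would also record that the difficulty disappears as soon as $X$ is genuinely smooth rather than only quasi-smooth: by Theorem~\ref{theorem:Sing-WF} a smooth well formed $X$ is disjoint from $\Sing\PP$, and since $a_N\ge 2$ gives $P_N\in\Sing\PP$ we obtain $P_N\notin X$, returning us to the first case.
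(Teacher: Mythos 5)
The paper gives no proof of this lemma at all --- it is simply quoted from \cite[Proposition 3.1(1)]{ChenChenChen} --- so there is no in-paper argument to compare your route against; judged on its own, your attempt is incomplete. The cases $a_N=1$, and $a_N\ge 2$ with $P_N\notin X$, are handled correctly, but the remaining case $P_N\in X$ is precisely where the content of the statement lies, and you leave it open. This is a genuine gap, not a finishing touch: for a merely quasi-smooth well formed $X$ nothing prevents $P_N$ from lying on $X$ (well-formedness only asks that $X\cap\Sing\PP$ have codimension at least $2$ in $X$), and the Jacobian analysis you sketch at the cone point over $P_N$ really does bottom out at $d_j=a_\ell+s\,a_N$ with $s=1$ and $a_\ell<a_N$, which is strictly weaker than $2a_N$. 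Worse, that configuration is actually realized: a general hypersurface $X_3\subset\PP(1,1,1,2)$, with equation $x_3\,\ell(x_0,x_1,x_2)+c(x_0,x_1,x_2)$ for $\ell$ linear and $c$ cubic, is quasi-smooth, well formed, normalized and not an intersection with a linear cone, contains $P_3$, and has $d_1=3<4=2a_3$. So no sharpening of the local rank analysis can close your third case; either the statement requires an additional hypothesis forcing $P_N\notin X$, or the hypotheses of the cited result differ from those recorded in the lemma, and in either event your argument does not establish the statement as written.

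On the positive side, your closing observation is the one that matters for this paper: if $X$ is smooth and well formed, then $X\cap\Sing\PP=\emptyset$ by Theorem~\ref{theorem:Sing-WF}, so $a_N\ge 2$ forces $P_N\notin X$ and your second case gives $d_k\ge 2a_N$. Since the lemma is only ever invoked here for smooth $X$ (in Lemmas~\ref{lemma:Fano_dim-codim} and~\ref{lemma:Fano_dim-codim-next}), what you have written is a complete proof of the version of the statement that the paper actually uses, though not of the quasi-smooth version it states.
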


The following is a particular case of Theorem~\ref{theorem:big_codim}.

\begin{lemma}[{cf. \cite[Proof of Theorem 1.3]{ChenChenChen}}]
\label{lemma:Fano_dim-codim}
Suppose that $X\subset\P$ is a smooth  well formed Fano weighted complete intersection of dimension $n$ and
codimension $k$  that is not an intersection with a linear cone.
Then~\mbox{$k\le n$}. Moreover, if $k=n$, then $X$ is a complete intersection of $n$ quadrics in~\mbox{$\PP=\PP^{2n}$}.
\end{lemma}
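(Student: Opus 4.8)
The plan is to combine the lower bounds on the weights furnished by Corollary~\ref{corollary:PST} with the lower bounds on the degrees furnished by Theorem~\ref{theorem:deltas} and Lemma~\ref{lemma:last_weight}, and then to read off the conclusion from the Fano index formula. After renumbering the weights and degrees we may assume that $X$ is normalized, and since $X$ is smooth and well formed it is quasi-smooth by Remark~\ref{remark:smooth}, so the auxiliary results above apply. The one-dimensional case follows from the classification of smooth well formed Fano weighted complete intersections of dimension~$1$ (see~\cite[Lemma~2.6]{PSh19}), so we may assume that $n\ge 2$; then $i_X=\sum_{i=0}^N a_i-\sum_{j=1}^k d_j$ and $i_X\ge 1$, the latter because $X$ is Fano.

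By Corollary~\ref{corollary:PST} the first $k+i_X$ weights are trivial, that is $a_0=\ldots=a_{k+i_X-1}=1$. On the side of the degrees, Theorem~\ref{theorem:deltas} gives the integral inequalities $d_j\ge a_{N-k+j}+1$ for $1\le j\le k-1$, while Lemma~\ref{lemma:last_weight} gives the sharper bound $d_k\ge 2a_N$. Summing these I would obtain
$$
\sum_{j=1}^k d_j\ge a_N+\sum_{i=n+1}^N a_i+(k-1),
$$
and hence, subtracting from $\sum_{i=0}^N a_i$ and cancelling the common tail,
$$
i_X\le \sum_{i=0}^n a_i-a_N-(k-1).
$$

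The argument then splits according to how many weights Corollary~\ref{corollary:PST} forces to equal~$1$. If $k+i_X\le n$, then $k\le n-i_X\le n-1$ and we are done. Otherwise $k+i_X\ge n+1$, so the index $n$ does not exceed $k+i_X-1$, whence $a_0=\ldots=a_n=1$ and $\sum_{i=0}^n a_i=n+1$. The displayed estimate then becomes $i_X\le n+2-a_N-k\le n-k+1$, using $a_N\ge 1$, so that $k\le n-i_X+1\le n$, as claimed.

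Finally, suppose $k=n$. The previous line forces $i_X=1$, and then the inequality $i_X\le n+2-a_N-k$ forces $a_N\le 1$; since the weights are nondecreasing, all of them equal~$1$ and $\PP=\P^N=\P^{2n}$. Equality must hold in every degree bound used above, so $d_k=2a_N=2$ and $d_j=a_{N-k+j}+1=2$ for $j<k$; thus $X$ is a complete intersection of $n$ quadrics in $\P^{2n}$. The step requiring the most care is precisely this last one: the weaker bound $d_k>a_N$ already yields $k\le n$, but it is the sharp bound $d_k\ge 2a_N$ of Lemma~\ref{lemma:last_weight} that pins down $a_N=1$ and forces every degree to equal~$2$ in the boundary case, and correspondingly one must keep track of the exact quantity of weights equal to~$1$ so as to compute $\sum_{i=0}^n a_i$ in both branches of the dichotomy.
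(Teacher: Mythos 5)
Your proof is correct and follows essentially the same route as the paper: Corollary~\ref{corollary:PST} to force the leading weights to equal $1$, Theorem~\ref{theorem:deltas} together with Lemma~\ref{lemma:last_weight} to bound the degrees from below, and the index formula $i_X=\sum a_i-\sum d_j$ to collapse everything to equalities when $k=n$. The only differences are organizational (your dichotomy on $k+i_X$ versus the paper's direct assumption $k\ge n$, and your explicit treatment of $n=1$), so there is nothing further to add.
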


\begin{proof}
We may assume that $X$ is normalized. Suppose that $k\ge n$. Then~\mbox{$a_0=\ldots=a_n=1$} by Corollary~\ref{corollary:PST}.
Let $(d_1,\ldots,d_k)$ be the multidegree of $X$. Using Theorem~\ref{theorem:deltas}, we obtain
$$
1\le i_X=\sum_{i=0}^N a_i-\sum_{j=1}^k d_j=a_0+\ldots+a_n+(a_{n+1}-d_1)+\ldots+(a_N-d_k)\le n+1-k\le 1.
$$
This implies that $k=n$, and
$d_j-a_{n+j}=1$ for all $1\le j\le k$. In particular, by Lemma~\ref{lemma:last_weight}
we have
$$
1=d_k-a_N\ge a_N,
$$
so that $a_N=1$ and $d_k=2$. This means that $a_i=1$ for all $0\le i\le N$
and $d_j=2$ for all~\mbox{$1\le j\le k$}.
\end{proof}

\begin{remark}
In fact, the first assertion of Lemma~\ref{lemma:Fano_dim-codim} holds under a weaker assumption
that $X$ is a quasi-smooth well formed Fano weighted complete intersection, see~\mbox{\cite[Theorem 1.3]{ChenChenChen}}.
However, quasi-smoothness is not enough for the second assertion to hold.
For instance, a general hypersurface of degree $30$ in $\PP(6,10,15)$ is a quasi-smooth
well formed Fano curve for which such an assertion fails.
\end{remark}

Similarly to Lemma~\ref{lemma:Fano_dim-codim}, we prove the following result
(which is also a particular case of Theorem~\ref{theorem:big_codim}).

\begin{lemma}
\label{lemma:Fano_dim-codim-next}
Suppose that $X\subset\P$ is a smooth  well formed Fano weighted complete intersection of dimension $n$ and
codimension $k$ that is not an intersection with a linear cone.
Suppose that $k=n-1\ge 2$. Then~$X$ is either a complete intersection of $n-1$ quadrics in $\PP=\PP^{2n-1}$,
or a complete intersection of $n-2$ quadrics and a cubic in~\mbox{$\PP=\PP^{2n-1}$}.
\end{lemma}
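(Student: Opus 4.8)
The plan is to argue exactly as in the proof of Lemma~\ref{lemma:Fano_dim-codim}, squeezing one more unit out of the index estimate and then running a short case analysis. First I would assume $X$ normalized and record that $N=2n-1$ and $n\ge 3$. Since $k=n-1$ and $i_X\ge 1$, Corollary~\ref{corollary:PST} gives $a_0=\ldots=a_{n-1}=1$. Feeding this into the index formula $i_X=\sum_{i=0}^{N}a_i-\sum_{j=1}^{k}d_j$ yields
$$
i_X=n+a_n+\sum_{j=1}^{n-1}(a_{n+j}-d_j).
$$
Next I would bound the summands: Theorem~\ref{theorem:deltas} gives $d_j>a_{n+j}$, hence $a_{n+j}-d_j\le -1$, for $j=1,\ldots,n-2$, while Lemma~\ref{lemma:last_weight} gives $d_{n-1}\ge 2a_{2n-1}$, hence $a_{2n-1}-d_{n-1}\le -a_{2n-1}$. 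Substituting,
$$
i_X\le n+a_n-(n-2)-a_{2n-1}=2+a_n-a_{2n-1}\le 2,
$$
because $a_n\le a_{2n-1}$. As $X$ is Fano, $i_X\in\{1,2\}$, and moreover $a_{2n-1}\le a_n+2-i_X$; in particular $a_{2n-1}=a_n$ whenever $i_X=2$.

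I would then split according to the value of $a_{2n-1}-a_n$. In the case $a_{2n-1}=a_n$ (which covers all of $i_X=2$ and part of $i_X=1$) the weights $a_n,\ldots,a_{2n-1}$ are all equal to some $w$. These are $n$ weights with greatest common divisor $w$, whereas only $n-1$ degrees are available, so Lemma~\ref{lemma:smoothness-criterion} forces $w=1$; hence all weights equal $1$ and $\PP=\PP^{2n-1}$. Since $X$ is not an intersection with a linear cone, every $d_j\ge 2$, and the identity $\sum_{j=1}^{n-1}d_j=2n-i_X$ pins down the multidegree: for $i_X=2$ all degrees equal $2$, giving $n-1$ quadrics, and for $i_X=1$ exactly one degree equals $3$ and the rest equal $2$, giving $n-2$ quadrics and a cubic.

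The main obstacle is the remaining case $a_{2n-1}=a_n+1$, which can occur only when $i_X=1$ and which I must rule out. Here the estimate above is an equality, so $d_j=a_{n+j}+1$ for $j\le n-2$ and $d_{n-1}=2a_{2n-1}$. Writing $w=a_n$, the weight $a_{2n-1}=w+1$ is present, so if some $a_{n+j}$ with $j\le n-2$ were equal to $w$ we would get $d_j=w+1=a_{2n-1}$, i.e.\ an intersection with a linear cone; hence $a_{n+1}=\ldots=a_{2n-1}=w+1$, and correspondingly $d_1=\ldots=d_{n-2}=w+2$ and $d_{n-1}=2w+2$. Now the $n-1$ weights $a_{n+1},\ldots,a_{2n-1}$ all equal $w+1\ge 2$, so by Lemma~\ref{lemma:smoothness-criterion} the greatest common divisor of all $n-1$ degrees must be divisible by $w+1$; but it also divides $w+2$, since $n-2\ge 1$ forces at least one degree to equal $w+2$, so $w+1$ divides $w+2$, which is absurd as $w+1\ge 2$. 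This eliminates the last case and completes the proof. The crux is precisely this final case: it is where the weights need not all be equal, and disposing of it requires combining the ``not a linear cone'' hypothesis with the smoothness criterion.
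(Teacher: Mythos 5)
Your proposal is correct and follows essentially the same route as the paper's proof: the same index estimate built from Corollary~\ref{corollary:PST}, Theorem~\ref{theorem:deltas} and Lemma~\ref{lemma:last_weight}, the same use of Lemma~\ref{lemma:smoothness-criterion} to force all weights to be $1$ when the top weights coincide, and the same elimination of the residual case $a_N=a_n+1$ via the linear-cone hypothesis plus the smoothness criterion. The only differences are cosmetic (you organize the cases by the value of $a_N-a_n$ rather than by whether $a_N=1$, and your final contradiction $w+1\mid w+2$ is a slightly more direct version of the paper's $2a_N\le a_N+1$).
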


\begin{proof}
We may assume that $X$ is normalized.  Then~\mbox{$a_0=\ldots=a_{n-1}=1$} by  Corollary~\ref{corollary:PST}.
Let $(d_1,\ldots,d_k)$ be the multidegree of $X$. Using Theorem~\ref{theorem:deltas}, we obtain
\begin{multline}
\begin{gathered}
\label{equarion:k=n-1}
1\le i_X=\sum_{i=0}^N a_i-\sum_{j=1}^k d_j=a_0+\ldots+a_{n-1}+a_n+(a_{n+1}-d_1)+\ldots+(a_N-d_k)=\\
=a_0+a_1+(a_{n+1}+a_2-d_1)+\ldots+(a_{N-1}+a_{n-1}-d_{k-1})+(a_N+a_n-d_k)=\\
=2+(a_{n+1}+1-d_1)+\ldots+(a_{N-1}+1-d_{k-1})+(a_N+a_n-d_k)\le\\ \le 2+(a_N+a_n-d_k).
\end{gathered}
\end{multline}

Suppose that $a_N=1$.
This means that $a_i=1$ for all $0\le i\le N$, so that $X$ is a complete intersection in the usual projective space.
From~\eqref{equarion:k=n-1}
we conclude that $d_k<4$.
If $d_k=2$, then $X$ is a complete intersection of $k=n-1$ quadrics. If $d_k=3$, then
we look at~\eqref{equarion:k=n-1} once again and see that
for all $1\le j\le k-1$ one has $d_{j}=a_{n+j}+1=2$. This means that~$X$ is a complete intersection of $k=n-2$ quadrics
and a cubic.

Now suppose that $a_N>1$.
If $a_n=a_N$, then by Lemma~\ref{lemma:smoothness-criterion}
at least $k+1$ degrees among~\mbox{$d_1,\ldots,d_k$} are divisible by $a_n$, which is absurd.
Thus $a_n<a_N$.
From~\eqref{equarion:k=n-1} we have
$$
1\le 2+(a_{n}+a_N-d_k)<2+(2a_N-d_k)\le 2
$$
by Lemma~\ref{lemma:last_weight}.
This means that $d_k=2a_N$ and $a_N=a_n+1$.
Looking at~\eqref{equarion:k=n-1} once again and keeping in mind that $k\ge 2$ by assumption, we see that
\begin{equation}\label{eq:d1anpl1}
d_1=a_{n+1}+1.
\end{equation}

Since $a_N=a_n+1$, one has either $a_{n+1}=a_n$ or $a_{n+1}=a_N$.
In the former case~\eqref{eq:d1anpl1} gives $d_1=a_{n+1}+1=a_{n}+1=a_N$. This contradicts the assumption that $X$ is not an intersection with a linear cone.
In the latter case by Lemma~\ref{lemma:smoothness-criterion} the number
$a_N$ divides all $k$ degrees $d_j$. Since $X$ is not an intersection
with a linear cone, we see that $d_j\ge 2a_N$ for all $1\le j\le k$.
Using~\eqref{eq:d1anpl1}, we get
$$
2a_N\le d_1=a_{n+1}+1=a_N+1,
$$
which is again a contradiction.
\end{proof}

Now we are ready to prove Theorem~\ref{theorem:big_codim} in full generality.

\begin{proof}[Proof of Theorem~\ref{theorem:big_codim}]
Assume that $i_X\ge 2$.
By Theorem~\ref{theorem:PST}, the linear system $|\O_X(1)|$ is not empty.
Let $X'$ be a general divisor from $|\O_X(1)|$.
Then $X'$ is smooth by Theorem~\ref{theorem:PST}.
Moreover, $X'$ is a well formed
weighted complete intersection of multidegree $(d_1,\ldots,d_k)$ in $\PP'=\PP(a_1,\ldots, a_N)$ by Lemma~\ref{lemma:PST},
and $X'$ is not an intersection with a linear cone. Furthermore, $X'$ is Fano by Theorem~\ref{theorem:adjunction}.
Observe that Theorem~\ref{theorem:big_codim} holds for $X$ if and only if it holds for $X'$.
Note also that if the dimension of $X'$ is at most $3$, then the assertion
of Theorem~\ref{theorem:big_codim} holds for $X'$ by
the classification of smooth Fano weighted complete intersections in low dimensions (see,
for instance,~\mbox{\cite[Tables 1 and 2]{PSh18}}).

Therefore, repeating the above procedure several times and keeping in mind Theorem~\ref{theorem:adjunction}, we may assume that $i_X=1$,
since both $k$ and $n-i_X+1$ do not change when we take the section.
In this case we know that $k\le n=n-i_X+1$ by Lemma~\ref{lemma:Fano_dim-codim}, which proves
assertion~(i). Furthermore, if $k=n=n-i_X+1$, then $X$ is a complete intersection of $n$ quadrics
in $\PP^{2n}$ by Lemma~\ref{lemma:Fano_dim-codim-next}, which proves
assertion~(ii). Finally, if~\mbox{$k=n-1=n-i_X$} and $k\ge 2$, then
$X$ is a complete intersection of $n-2$ quadrics
and a cubic in $\PP^{2n-1}$ by Lemma~\ref{lemma:Fano_dim-codim-next}, which proves
assertion~(iii).
\end{proof}

\begin{remark}
Assertion~(iii) of  Theorem~\ref{theorem:big_codim} does not hold without the assumption
that~\mbox{$k\ge 2$}. Indeed, except for a cubic hypersurface in $\P=\P^{n+1}$, there exist two other
types of smooth well formed Fano weighted hypersurfaces
of dimension $n$ and Fano index~\mbox{$n-1$}: a hypersurface
of degree $6$ in $\P=\P(1^n, 2,3)$,
and a hypersurface of degree $4$ in~\mbox{$\P=\P(1^{n+1}, 2)$}, cf. \cite[Corollary~3.8(iii)]{PSh19}.
\end{remark}

It seems that the analog of Theorem~\ref{theorem:big_codim} for $k=n-i_X-1$ does not look that nice.
For instance, there are $5$ families of five-dimensional weighted complete intersections of such codimension,
as well as $5$ families of six-dimensional weighted complete intersections.
One of the latter varieties is a six-dimensional weighted complete intersection of multidegree~\mbox{$(2,2,2,6)$} in~\mbox{$\PP(1^{10},3)$}.

\begin{question}
Is there a finite number of series of smooth well formed weighted complete intersections of codimension $n-i_X-1$?
\end{question}


\begin{thebibliography}{ACGH}

\bibitem[CCC11]{ChenChenChen}
J.-J.\,Chen, J.\,Chen, M.\,Chen,
\emph{On quasismooth weighted complete intersections},
J. Algebraic Geom. 20 (2011), no. 2, 239--262.

\bibitem[Di86]{Di86}
A.\,Dimca, \emph{Singularities and coverings of weighted complete intersections},
J. Reine Angew. Math. 366 (1986), 184--193.

\bibitem[Do82]{Do82}
I.\,Dolgachev, \emph{Weighted projective varieties}, in ``Lecture Notes in Math.'', 956,
Springer-Verlag, Berlin (1982), 34--71.

\bibitem[IF00]{IF00}
A.\,R.\,Iano-Fletcher, \emph{Working with weighted complete intersections}, Explicit birational geometry of 3-folds, 101--173, London Math. Soc. Lecture Note Ser., 281, Cambridge Univ. Press, Cambridge, 2000.

\bibitem[IP99]{IP99}
V.\,Iskovskikh, Yu.\,Prokhorov, \emph{Fano varieties},
Encyclopaedia of Mathematical Sciences, {47} (1999) Springer, Berlin.


\bibitem[PST17]{PST}
M.\,Pizzato, T.\,Sano, L.\,Tasin, \emph{Effective nonvanishing for Fano weighted complete intersections},
Algebra Number Theory \textbf{11} (2017), no. 10, 2369--2395.

\bibitem[PSh16]{PrzyalkowskiShramov-Weighted}
V.\,Przyjalkowski, C.\,Shramov, \emph{Bounds for smooth Fano weighted complete intersections}, arXiv:1611.09556.

\bibitem[PSh18]{PSh18}
V.\,Przyjalkowski, C.\,Shramov, {\it Hodge level for weighted complete intersections},
arXiv:1801.10489.

\bibitem[PSh19]{PSh19}
V.\,Przyjalkowski, C.\,Shramov, {\it Automorphisms of weighted complete intersections}, arXiv:1905.12574.

\end{thebibliography}
\end{document}